\documentclass[a4paper,12pt,twoside]{article}
\usepackage{wiaspreprint}
\usepackage[utf8]{inputenc}
\usepackage[T1]{fontenc}
\usepackage[english]{babel}
\usepackage{amsmath,amssymb,amsfonts,amsthm,mathptmx}

\theoremstyle{remark}
\newtheorem{remark}{Remark}[section]
\theoremstyle{definition}
\newtheorem{theorem}{Theorem}[section]
\newtheorem{definition}[theorem]{Definition}
\newtheorem{example}[theorem]{Example}
\newtheorem{proposition}[theorem]{Proposition}
\newtheorem{lemma}[theorem]{Lemma}
\newtheorem{corollary}[theorem]{Corollary}

\allowdisplaybreaks
\DeclareMathOperator{\R}{\mathbb{R}}

\DeclareMathOperator{\C}{\mathcal{C}}

\DeclareMathOperator{\N}{\mathbb{N}}
\DeclareMathOperator{\BV}{{BV}([0,T])}

\DeclareMathOperator{\ra}{\rightarrow}
\DeclareMathOperator{\de}{\text{d}}

\newcommand{\sym}{\text{sym}}

\newcommand{\D}{\mathcal{D}(\mathcal{K})}
\newcommand{\Do}{\mathcal{D}(\mathcal{K}_0)}

\DeclareMathOperator{\loc}{loc}

\newcommand{\f}[1]{{\pmb{ #1}}}
\DeclareMathOperator{\di}{\nabla\cdot}

\DeclareMathOperator{\clos}{clos}
\DeclareMathOperator{\Ls}{Ls}
\DeclareMathOperator{\Li}{Li}

\newcommand{\tu}{\tilde{\f u}}

\newcommand{\ov}[1]{\overline{{#1}}}

\renewcommand{\t}{\partial_t}

\newcommand{\vv}{\tilde{\f v}}

\newcommand{\V}{H^1_{0,\sigma}(\Omega)}

\newcommand{\Ha}{{L}^2_{\sigma}(\Omega)}

\newcommand{\e}{\text{\textit{e}}}

\setcounter{tocdepth}{2}
\title[Energy-variational solutions]{On the existence of weak solutions in the context of multidimensional incompressible fluid dynamics}

\begin{document}
\pagefootright{Berlin, April 22, 2021/rev. September 3, 2021}

\author[R. Lasarzik]{
Robert Lasarzik\nofnmark\footnote{Weierstrass Institute \\
Mohrenstr. 39 \\ 10117 Berlin \\ Germany \\
E-Mail: robert.lasarzik@wias-berlin.de}}
\nopreprint{2834}	
\nopreyear{2021}	
\selectlanguage{english}		
\date{April 22, 2021 (revision: September 3, 2021)}			
\subjclass[2020]{35D99, 35Q30, 35Q31, 76D05}	
\keywords{Existence,
uniqueness,  {Navier--Stokes}, {E}uler, incompressible, fluid dynamics, dissipative solutions}
\maketitle
\begin{abstract}
We define the concept of energy-variational solutions for the Navier--Stokes and Euler equations. 
The underlying relative energy inequality holds as an  equality for classical solutions and if the additional variable vanishes, these solutions are equivalent to the weak formulation with the strong energy inequality. 
By introducing an additional defect variable in time, all restrictions and all concatenations of energy-variational solutions are again energy-variational solutions. 
Via the criterion of maximal dissipation, a unique solution is selected that is not only continuously depending on the data but also turns out to be a unique weak solution. 

\end{abstract}

\tableofcontents

\section{Introduction}
The Navier--Stokes and Euler equations are the standard models for incompressible fluid dynamics. 
Both are recurrent tools in computational fluid dynamics for weather forecast, micro fluidic devices~\cite{applfluid} or industrial processes like steel production~\cite{najib}. 
There exists a vast literature concerning the Navier--Stokes and Euler equations.
In case of the Navier--Stokes equation,  we only mention here the existence proof for \textit{weak solutions} in three dimension by Leray~\cite{leray} and the \textit{weak-strong uniqueness} result due to Serrin~\cite{serrin}. 
In the context of the Euler equations, the existence of weak solutions in any space dimension is already known for special initial data (see~\cite{convint}) also fulfilling the energy inequality~(see~\cite{convint2}).
This result was proven via the convex integration technique.
This technique grants the existence of infinitely many and also non-physical weak solutions. Additionally, it was proven for the Navier--Stokes equations via similar techniques that there exist infinitely many weak solutions that do not fulfill the energy inequality~\cite{buckmaster}. 
But what is lacking in the literature so far is an existence result for the Navier--Stokes equations in space dimensions larger than four and for the Euler equation with general initial data. Revisiting the previously introduced dissipative solutions for the equations of incompressible fluid dynamics, we refine this concept by introducing \textit{energy-variational solutions}.
%
%
%
%
 As the name already suggests, this notion of generalized solutions is based on a \textit{variation of the underlying energy-dissipation principle}. 
The relative energy inequality can be seen as a variation of the energy-dissipation principle with respect to sufficiently regular functions. 


Dissipative solutions were proposed by  P.-L.~Lions~\cite[Sec.~4.4]{lionsfluid}  in the context of the Euler equations. 
The current author applied this concept in the context of nematic liquid crystals~\cite{diss} and nematic electrolytes~\cite{nematicelectro}. 
It was observed that natural discretizations complying with the properties of the system, like energetic or entropic principles, as well as algebraic restrictions converge naturally to a dissipative solution instead of a measure valued solution (see~\cite{nematicelectro} and~\cite{approx} for details).
In comparison to measure-valued solutions, the degrees of freedom are heavily reduced and no defect measures occur, which are especially difficult to approximate. 
The relative energy inequality, which is at the heart of the dissipative and energy-variational solution concept is also a recurrent tool in PDE theory to prove for instance weak-strong uniqueness~\cite{weakstrong}, stability of stationary states~\cite{diss}, convergence to singular limits~\cite{singular}, or to design optimal control schemes~\cite{approx}. 
An advantage in comparison to distributional or measure-valued solutions is that the solution set inherits the convexity of the energy and dissipation functional, which permits to define appropriate uniqueness criteria~\cite{maxdiss}. 

The definition of energy-variational solutions follows a similar idea as the definition of dissipative solutions, both rely on the so-called relative energy inequality, which compares the solution to smooth test functions fulfilling the PDE only approximately.
But the relative energy inequality for energy-variational solutions is refined such that the resulting inequality becomes an equality for smooth solutions. The nonlinear-convective terms are not only estimated by the relative energy but included in the underlying dissipation potential.
Furthermore, the relative energy inequality holds for all given intervals $(s,t)\subset [0,T]$. This is achieved by introducing a defect variable in time, which measures the difference of weak and strong convergence in the energy in every point in time. Therewith every concatenation and restriction of the solutions to a sub or super time interval is a solution again. This gives rise to the so-called semi-flow property. 
Still the properties of the relative energy inequality remain present, it is preserved for sequences converging in the weak topologies of the associated natural energy and dissipation spaces. Thus in comparison to standard weak solutions,  energy-variational solutions have the advantage that no strong convergence is needed in order to pass to the limit in this formulation. Only Helly's selection principle is used in order to infer the existence of the additional defect variable. 
The existence result only relies on  standard constructive proofs, \textit{i.e.}, a Galerkin discretization in the case of the Navier--Stokes equations and the vanishing viscosity limit in the case of the Euler equations.

Since the energy and dissipation functionals in the considered cases are convex, the set of energy-variational solutions is convex and weakly$^*$ closed. This allows to identify selection criteria in order to select the physically relevant solution. Following the ideas of~\cite{maxbreit,maximaldiss,dafermosscalar,maxdiss}, we propose the selection principle of maximal dissipation. 
This says that the physically relevant solution dissipates energy at the highest rate, hence minimizes the energy in every point in time. This principle becomes even more apparent in thermodynamical consistent systems, where the maximized dissipation implies  maximal entropy (see for instance~\cite{generic} and~\cite[Sec.~9.7]{dafermosbook}).

In~\cite{maxdiss}, the set of dissipative solutions together with the time integral of the energy functional is identified as a suitable convex structure on which such a minimization problem can be defined. The resulting maximally dissipative solution is indeed well-posed in the sense of Hadamard.  
The result of the article at hand applies this technique to energy-variational solutions and the selected unique solution inherits the semi-flow property. 
The selected solution is thus well-posed and unique. In comparison to~\cite{maxdiss}, the energy-variational solution concept allows to define the selection criterion locally in time and the selected minimal energy-variational solution can be shown to be a weak solution.

It is worth noticing that in the framework of minimal energy-variational solutions it is possible to pass to the limit in the quadratic convection term without any strong compactness argument. Only arguments from the direct method of the calculus of variations are needed. 
It is possible to pass to the limit in the quadratic term, first using an additional variable, which catches the difference between weak and strong convergence of the energy. But minimizing the energy afterwards, implies that the additional variable is actually zero. This can be interpreted as additional regularity that the minimizer inherits or that a minimizing sequence of the energy functional actually converges strongly due to the uniform convexity of the underlying energy norm. 
This provides a new tool for the existence of minimal energy-variational and thus weak solutions to nonlinear evolution equations. 
Usually compact embeddings and \textit{a priori} estimates of the time derivative are used to infer strong convergence via some Aubin--Lions argument (compare to~\cite{temam}). 
These ingredients are irrelevant in the present proof, since it only relies on weak convergence in natural spaces and the weakly-lower semi-continuity of the underlying energy and dissipation functionals. 
The proposed technique seems to be very powerful and easily adapted to other systems of PDEs. 
Hence, this gives hope that the new approach 
may allows to prove the existence of  minimal energy-variational and thus weak solutions to some PDE systems, where this seems to be out of reach with other available techniques. This includes multidimensional conservation laws~\cite{maxbreit}, liquid crystals~\cite{weakstrong},  heat-conducting complex fluids~\cite{nsch}, or GENERIC systems in general (see~\cite{generic} and~\cite{maxdiss}).



\textbf{Plan of the paper: }
After providing some notation and preliminaries in Section~\ref{sec:not},  the different solution concepts of weak, energy-variational and minimal energy-variational solutions are defined in Section~\ref{sec:def}. 
Then, we state the main Theorems in Section~\ref{sec:main} and prove them afterwards (see Section~\ref{sec:proof}).

\section{Definitions and main theorems\label{sec:nav}}
\subsection{Preliminaries\label{sec:not}}
Before, we provide the definitions and main results, we collect some notation and preliminary results. 

 \textbf{Notations:}
Throughout this paper, let $\Omega \subset \R^d$ be a {bounded}  Lipschitz domain with $d \geq  2$.
The space of smooth solenoi\-dal functions with compact support is denoted by $\mathcal{C}_{c,\sigma}^\infty(\Omega;\R^d)$. By $L^2_{\sigma}( \Omega) $ and  $\V$ we denote the closure of $\mathcal{C}_{c,\sigma}^\infty(\Omega;\R^d)$ with respect to the norm of $ L^2(\Omega) $ and $  H^1( \Omega) $, respectively.
Note that $L^2_{\sigma}(\Omega) $ can be characterized by $ L^2_{\sigma}(\Omega) = \{ \f v \in L^2(\Omega)| \di \f v =0 \text{ in }\Omega\, , \f n \cdot \f v = 0 \text{ on } \partial \Omega \} $, where the first condition has to be understood in the distributional sense and the second condition in the sense of the trace in $H^{-1/2}(\partial \Omega )$. 
The dual space of a Banach space $V$ is always denoted by $ V^*$ and equipped with the standard norm; the duality pairing is denoted by $\langle\cdot, \cdot \rangle$ and the $L^2$-inner product by $(\cdot , \cdot )$.
The total variation of a function $E:\R\ra \R$ is given by 
$ \| E \|_{\text{TV}(0,T)}= \sup_{0<t_0<\ldots <t_n<T} \sum_{k=1}^N| E(t_{k-1})-E(t_k)|$
where the supremum is taken over all finite partitions of the interval $[0,T]$. 
We denote the space of all functions of bounded variations on $[0,T]$ by~$\BV$. 

Note that the total variation of a monotone decreasing nonnegative function only depends on the initial value, \textit{i.e.,}
\begin{align*}
\| E \|_{\text{TV}(0,T)} = \sup_{0<t_0<\ldots <t_n<T}\sum_{k=1}^N| E(t_{k-1})-E(t_k)| \leq E(0) - E(T) \leq E(0) \,.
\end{align*}

The symmetric part of a matrix is given by 
$\f A_{\sym}: = \frac{1}{2} (\f A + \f A^T)$  for $\f A \in \R^{d\times  d}$.
For the product of two matrices $\f A, \f B \in \R^{d\times d }$, we observe
 \begin{align*}
 \f A: \f B = \f A : \f B_{\sym}\,, \quad \text{if } \f A^T= \f A\, .
 \end{align*}
Furthermore, it holds
$ \f a\otimes \f b : \f A = \f a \cdot \f A \f b$ for
$\f a, \f b \in \R^d$, $\f A \in \R^{d\times d }$  and hence $ \f a \otimes \f a : \f A = \f a \cdot \f A \f a =  \f a \cdot \f A_{\sym} \f a$.
By $I$, we denote the identity matrix in $\R^{d\times d}$ and by $\R_+:=[0,\infty)$ the positive real numbers.

The following lemma provides the connection between the almost everywhere pointwise formulation of an inequality with the weak one. 
\begin{lemma}\label{lem:invar}
Let $f\in L^1(0,T)$ and $g\in L^\infty(0,T)$ with $g\geq 0$ a.e.~in $(0,T)$.
Then the two inequalities 
\begin{align*}
-\int_0^T \phi'(t) g(t) \de t  + \int_0^T \phi(t) f(t) \de t \leq 0 
\end{align*}
for all $\phi \in {\C}^1_c ((0,T))$ with $\phi \geq 0$ for all $t\in (0,T)$
and 
\begin{align}
g(t) -g(s) + \int_s^t f(s) \de \tau \leq 0 \quad \text{for a.e.~}t,\, s\in(0,T)\,\label{ineq2}
\end{align}
are equivalent. 
\end{lemma}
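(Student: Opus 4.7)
The plan is to establish the two implications separately, working throughout with the auxiliary function
\[
F(t) := g(t) + \int_0^t f(\tau)\,\de\tau,
\]
which belongs to $L^\infty(0,T)$ because $g\in L^\infty$ and $t\mapsto \int_0^t f\,\de\tau \in \C([0,T])$. With this notation, \eqref{ineq2} is exactly the statement that $F(t)\leq F(s)$ for a.e.\ pair $s\leq t$, i.e., $F$ is essentially non-increasing on $(0,T)$.

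For the direction \eqref{ineq2}$\,\Rightarrow\,$weak form, I would first use $\phi(0)=\phi(T)=0$ and Fubini to rewrite
\[
\int_0^T \phi(t)f(t)\,\de t \;=\; -\int_0^T \phi'(\tau)\int_0^\tau f(t)\,\de t\,\de\tau,
\]
which reduces the weak inequality to the single assertion $-\int_0^T \phi'(t)F(t)\,\de t\leq 0$. Choose $a,b$ with the support of $\phi$ contained in $(a,b)\subset(0,T)$. For $0<\delta<\min(a,T-b)$, uniform convergence $[\phi(t)-\phi(t+\delta)]/\delta\to -\phi'(t)$ together with $F\in L^\infty$ and dominated convergence gives
\[
-\int_0^T \phi'(t)F(t)\,\de t \;=\; \lim_{\delta\to 0^+}\int_0^T \phi(t)\,\frac{F(t)-F(t-\delta)}{\delta}\,\de t,
\]
where the change of variables $t\mapsto t+\delta$ has moved the shift onto $F$. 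By Fubini applied to the two-dimensional null set where \eqref{ineq2} fails, for a.e.\ $\delta>0$ one has $F(t)\leq F(t-\delta)$ for a.e.\ $t$; for such $\delta$ the integrand is non-positive, and taking $\delta\to 0$ along this full-measure set of $\delta$ yields the claim.

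For the converse direction, I would plug a standard approximation of $\chi_{(s,t)}$ into the weak form: for $0<s<t<T$ and small $\varepsilon>0$, take $\phi_\varepsilon\in\C^1_c((0,T))$ with $0\leq\phi_\varepsilon\leq 1$, equal to $1$ on $[s+\varepsilon,t-\varepsilon]$, vanishing outside $[s-\varepsilon,t+\varepsilon]$, and affine on the two transition intervals. Then $\phi_\varepsilon\to\chi_{(s,t)}$ boundedly, so $\int_0^T\phi_\varepsilon f\,\de\tau\to\int_s^t f\,\de\tau$, while a direct calculation gives
\[
-\int_0^T \phi'_\varepsilon(\tau)g(\tau)\,\de\tau \;=\; \frac{1}{2\varepsilon}\int_{t-\varepsilon}^{t+\varepsilon} g\,\de\tau \;-\; \frac{1}{2\varepsilon}\int_{s-\varepsilon}^{s+\varepsilon} g\,\de\tau,
\]
which converges to $g(t)-g(s)$ whenever $s,t$ are Lebesgue points of $g$. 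Since the Lebesgue set of $g\in L^\infty$ has full measure in $(0,T)$, \eqref{ineq2} holds for a.e.\ $s\leq t$.

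The main technical obstacle is the measure-theoretic bookkeeping in the first direction: the hypothesis in \eqref{ineq2} provides only a null set of bad pairs in the two-dimensional parameter $(s,t)$, whereas the difference-quotient argument needs a one-dimensional a.e.\ statement $F(t)\leq F(t-\delta)$ for a fixed (a.e.) $\delta$. This is handled by applying Fubini to the measure-preserving change of variables $(s,t)\mapsto(t-s,t)=:(\delta,t)$ to transfer the exceptional null set from the $(s,t)$-plane to the $(\delta,t)$-plane. Once this transfer is in place, dominated convergence routinely closes the argument.
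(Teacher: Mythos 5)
Your proposal is correct, and it splits cleanly into one half that matches the paper and one that does not. The direction ``weak form $\Rightarrow$ \eqref{ineq2}'' is essentially the paper's argument: both insert an approximation of the indicator $\chi_{[s,t]}$ and pass to the limit at Lebesgue points of $g$ (only note that your piecewise affine $\phi_\varepsilon$ is Lipschitz but not $\C^1$, so strictly speaking you should mollify the corners; this changes nothing). The converse is where you genuinely diverge. The paper sums \eqref{ineq2} over a partition $0<t_1<\dots<t_N<T$ with weights $\phi(\xi_k)$, passes to the limit to interpret the first sum as a Stieltjes-type integral against $g$, and then integrates by parts. You instead absorb the forcing into $F(t)=g(t)+\int_0^t f\,\de\tau$, so that \eqref{ineq2} says $F$ is essentially non-increasing, reduce the weak form to $-\int_0^T\phi'F\,\de t\le 0$, and conclude by a difference-quotient/translation argument, using Fubini through the measure-preserving change of variables $(s,t)\mapsto(t-s,t)$ to convert the two-dimensional a.e.\ hypothesis into ``for a.e.\ $\delta$, $F(t)\le F(t-\delta)$ for a.e.\ $t$,'' and then sending $\delta\to0$ along these good values. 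Your route needs only Fubini and dominated convergence and is arguably the more robust one for the stated hypotheses, since $g$ is merely in $L^\infty(0,T)$ and the limit of the partition sums in the paper's sketch requires extra care (monotonicity or bounded variation, or an Abel-summation step) before it can be read as a Stieltjes integral; the paper's partition argument, on the other hand, is the natural one in the $\BV$ setting in which the lemma is later applied. Both arguments use $\phi\ge0$ exactly where they should, and neither needs the assumption $g\ge0$, which plays no role in the equivalence itself.
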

\begin{proof}
Since this is a rather standard lemma, I only shortly want to comment on the proof. For the if-direction, one may argues by  inserting an approximating sequence of the indicator function $\chi_{[s,t]}$ for $\phi$. To infer the only-if-direction, we sum up the second inequality for any partition $0<t_1<\ldots < t_N<T$ of $[0,T]$ to infer
\begin{align*}
\sum_{n=0}^{N-1}\phi(\xi_k)[g(t_{n+1})-g(t_n)] + \sum_{n=0}^{N-1}\phi(\xi_k )\int_{t_n}^{t_{n+1}}f(\tau)\de \tau \leq 0 \quad \text{with }\xi _n \in (t_n,t_{n+1})\,.
\end{align*}
Passing to the limit in the partition, gives the integral in the sense of Stieltjes (cf.~\cite[Chap.~8, Sec.~6]{natanson}). An integration-by-parts in the first term implies the first inequality in Lemma~\ref{lem:invar}. 
\end{proof}
Additionally, we use a lemma that provides the lower semi-continuity of convex functionals. 
\begin{lemma}\label{lem:lowcon}
Let $ A \subset \R^{d+1} $ be  a  bounded open set and $f: A \times \R^n \times \R^m \ra \R_+$ with $d, n, m \geq 1$, a measurable nonnegative function such that
 $ f ( \f y, \cdot, \cdot ) $ is lower semi-continuous on $\R^n\times \R^m$ for a.e.~$\f y \in A$, and $f$ is convex in the last entry. 
For sequences $ \{ \f u_k \}_{k\in \N} \subset L^1_{\loc}(A; \R^n)$ and $ \{ \f v_k \}_{k\in \N} \subset L^1_{\loc}(A; \R^m)$ as well as functions $\f u \in L^1_{\loc}(A; \R^n)$ and $ \f v \in L^1_{\loc}(A; \R^m)$ with
\begin{align*}
\f u _k \ra \f u \quad \text{a.e.~in }A \quad \text{and} \quad \f v_k \rightharpoonup \f v \quad \text{in }L^1_{\loc}(A ; \R^n) \,
\end{align*}
it holds 
\begin{align*}
\liminf_{k\ra \infty} \int_A f(\f y, \f u_k(\f y), \f v_k(\f y)) \de \f y \geq \int_A f ( \f y, \f u(\f y) , \f v(\f y)) \de \f y \,.
\end{align*}
\end{lemma}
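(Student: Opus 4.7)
The assertion is a variant of the classical Ioffe--Olech lower semi-continuity theorem for convex normal integrands. My plan combines three standard tools: Egorov's theorem, Mazur's lemma, and Fatou's lemma.

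\emph{Reduction.} Passing to a subsequence (not relabeled), assume $L := \liminf_k \int_A f(\f y,\f u_k,\f v_k)\,\de\f y$ is a finite limit (otherwise the inequality is trivial). Given $\epsilon > 0$, Egorov's theorem on the bounded set $A$ produces a measurable subset $A_\epsilon \subset A$ with $|A \setminus A_\epsilon| < \epsilon$ on which $\f u_k \to \f u$ uniformly. Since $f \geq 0$, dropping the integral over $A \setminus A_\epsilon$ only lowers the left-hand side, and the full integral over $A$ is recovered as $\epsilon \to 0$ by monotone convergence.

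\emph{Mazur and Fatou.} Apply Mazur's lemma to the weak convergence $\f v_k \rightharpoonup \f v$ in $L^1_{\loc}(A)$ to obtain finite convex combinations $\tilde{\f v}_k := \sum_{j \geq k}\lambda_{k,j}\f v_j$ (with $\lambda_{k,j}\geq 0$ and $\sum_j\lambda_{k,j}=1$) converging strongly in $L^1_{\loc}(A)$, and hence, after a further subsequence, a.e.\ in $A$ to $\f v$. Setting
\[
\tilde g_k(\f y) := \sum_{j \geq k}\lambda_{k,j}\, f(\f y,\f u_j(\f y),\f v_j(\f y)),
\]
the convexity of the coefficients together with $\int_A f(\f y,\f u_j,\f v_j)\,\de\f y \to L$ yield $\limsup_k \int_A \tilde g_k\,\de\f y \leq L$. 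Fatou's lemma applied to $\tilde g_k \geq 0$ on $A_\epsilon$ then reduces the task to the pointwise bound
\[
\liminf_{k \to \infty} \tilde g_k(\f y) \geq f(\f y,\f u(\f y),\f v(\f y)) \quad \text{for a.e.\ } \f y \in A_\epsilon.
\]

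\emph{Pointwise inequality --- main obstacle.} The delicate point is this pointwise bound: convexity of $f$ is assumed only in the last variable, while the Mazur combination averages over different $\f u_j$'s. My plan is to exploit the uniform convergence $\f u_j \to \f u$ on $A_\epsilon$ supplied by Egorov: for any $\eta > 0$ and $k$ large enough, every $\f u_j(\f y)$ with $j \geq k$ lies in the closed ball $\overline{B}_\eta(\f u(\f y))$. Introducing
\[
h_\eta(\f y,\f v) := \inf_{|\f u' - \f u(\f y)| \leq \eta} f(\f y,\f u',\f v),
\]
the joint lower semi-continuity of $f$ together with the compactness of the closed ball yield that $h_\eta(\f y, \cdot)$ is lower semi-continuous in $\f v$ and that $h_\eta(\f y,\f v_0) \nearrow f(\f y,\f u(\f y),\f v_0)$ as $\eta \downarrow 0$ for every $\f v_0$. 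Passing to the convex envelope $h_\eta^{**}$ of $h_\eta$ in the $\f v$-variable to restore convexity, Jensen's inequality gives $\tilde g_k(\f y) \geq h_\eta^{**}(\f y, \tilde{\f v}_k(\f y))$; passing first $k \to \infty$ (using the lower semi-continuity of $h_\eta^{**}(\f y,\cdot)$ and $\tilde{\f v}_k(\f y) \to \f v(\f y)$) and then $\eta \downarrow 0$ closes the argument.
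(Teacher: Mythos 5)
Your Steps 1--3 (reduction, Egorov, Mazur plus Fatou) are standard and essentially fine, modulo a routine diagonal argument to get a single sequence of convex combinations converging a.e.\ on all of $A$. The genuine gap sits exactly in the step you flag as the main obstacle. The claim that $h_\eta^{**}(\f y,\cdot)\nearrow f(\f y,\f u(\f y),\cdot)$ as $\eta\downarrow 0$ is false: monotone pointwise convergence $h_\eta\nearrow f(\f y,\f u(\f y),\cdot)$ is not inherited by the convex envelopes, because convexification is a global operation in $\f v$ and the region where $h_\eta$ is small may escape to infinity as $\eta\downarrow 0$. Concretely, take $n=m=1$, $f(\f y,u,v)=(2-u^2v)^+$ (continuous, nonnegative, convex in $v$) and $\f u(\f y)=0$. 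Then $h_\eta(v)=\inf_{|u'|\le\eta}(2-u'^2v)^+$ equals $2$ for $v\le 0$, decreases linearly and vanishes for $v\ge 2/\eta^2$; for any convex $g\le h_\eta$ and any $M>0$ one gets $g(v)\le 2(2/\eta^2-v)/(2/\eta^2+M)\ra 0$, so $h_\eta^{**}\equiv 0$ for every $\eta>0$, while $f(\f y,0,\cdot)\equiv 2$. Hence your chain of inequalities only delivers $\liminf_k\tilde g_k(\f y)\ge 0$, not the required bound; the counterexample does not contradict the lemma (Ioffe's theorem holds for this $f$), it kills precisely your last step.

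Moreover, the pointwise inequality you reduce to is not a consequence of the assumed convergences alone: with the same $f$, put weight $\tfrac12$ on two tail indices with $u$-values $\eta$ and $\eta^2$ and $v$-values $+4/\eta^2$ and $-4/\eta^2$; then the convex combination of the $v$-values is $0=v_0$, the $u$-values tend to $0=u_0$, but the convex combination of the $f$-values tends to $1<2=f(0,0)$. So a pure Mazur--Fatou reduction to a pointwise statement cannot work: the obstruction is mass of $\f v_k$ escaping to infinity, and what rules this out in the integral setting is the equi-integrability of $\{\f v_k\}$ on compact subsets provided by the Dunford--Pettis theorem, an ingredient your argument never invokes. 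The classical proofs use it: either Ioffe's original argument, or the De Giorgi-type approximation of $f$ from below by finitely many integrands affine in $\f v$ with coefficients continuous in $(\f y,\f u)$, to which Egorov and the weak $L^1$ convergence can be applied termwise. Note finally that the paper offers no proof of this lemma but cites Ioffe, so there is no in-paper argument to compare with; as it stands, your proposal does not close.
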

The proof of this assertion can be found in~\cite{ioffe}. 

The following property of $\BV$-functions can for instance be found in~\cite{BV}. 
\begin{lemma}
Let $E:\R\ra \R$ be a function of bounded variation, $E\in\BV$. Then $E$ is continuous up to a countable subset of $(0,T)$ and the left- and right-limits are uniquely defined in every  interior point, \textit{i.e.},
\begin{align*}
E(t-) = \lim_{s\nearrow t} E(s) \quad E(t+)=\lim_{ s \searrow t} E(s) \quad\text{for all }t\in (0,T)
\end{align*}
and with one-sided limits at the end points. The usual choice are the so-called \glqq{}cadlag\grqq{}
(continuity a droit limit a gauche) representations by defining $E(t):=E(t-)$. Since we want to minimize the energy function $E$ and for a monotonously non-increasing function it always holds  that $ E(t+)\leq E(t-)$, we rather chose the right-continuous representation by defining~$E(t):=E(t+)$. 
\end{lemma}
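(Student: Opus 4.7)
The plan is to reduce the statement to the monotone case via the Jordan decomposition, which is the classical route for such a result. First I would introduce the cumulative variation function $V(t):=\|E\|_{\text{TV}(0,t)}$, which is well-defined and finite for all $t\in[0,T]$ by the assumption $E\in\BV$. The key observation is that for $s<t$ one has $V(t)-V(s)\geq |E(t)-E(s)|$ directly from the definition of the total variation (any partition of $[0,s]$ together with $\{s,t\}$ is a partition of $[0,t]$). In particular, both $V$ and $V-E$ are non-decreasing on $[0,T]$, and $E=V-(V-E)$ is written as a difference of two bounded monotone non-decreasing functions. Hence it suffices to establish both claims for a bounded non-decreasing function $F:[0,T]\to\R$.

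Next I would prove the existence of the one-sided limits for such an $F$ by a direct supremum/infimum argument. For any $t\in(0,T)$, the function $s\mapsto F(s)$ is bounded on $[0,t)$ and monotone, so
\begin{align*}
F(t-) := \sup_{s<t} F(s) = \lim_{s\nearrow t} F(s)
\end{align*}
is well-defined and finite; symmetrically $F(t+):=\inf_{s>t} F(s)=\lim_{s\searrow t}F(s)$. The same argument gives a right-limit at $0$ and a left-limit at $T$. Combining these for $V$ and $V-E$ yields the existence of $E(t\pm)$ at every interior point and the one-sided limits at the endpoints.

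For the countability of discontinuities I would use the standard jump-summation trick. For a non-decreasing $F$ the jumps $J(t):=F(t+)-F(t-)\geq 0$ satisfy
\begin{align*}
\sum_{t\in S}J(t) \leq F(T)-F(0) < \infty
\end{align*}
for any finite set $S\subset[0,T]$ of discontinuities (by telescoping across a fine enough partition separating the jump points). Consequently, for each $n\in\N$ the set $D_n:=\{t\in[0,T]:J(t)>1/n\}$ must be finite, and the full discontinuity set $D=\bigcup_{n\in\N}D_n$ is countable. Applying this to $V$ and $V-E$ and taking the union, the discontinuity set of $E$ is a countable union of countable sets, hence countable; $E$ is continuous at every other point, where $E(t-)=E(t+)$ by construction.

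The only mildly delicate step is the monotonicity of $V-E$, which depends on the sharp bound $V(t)-V(s)\geq|E(t)-E(s)|$; everything else is routine. The final two sentences of the statement are definitional (a choice of the right-continuous cadlag representative), motivated by the fact that for a monotone non-increasing function one always has $E(t+)\leq E(t-)$, so the right-continuous representative produces the pointwise smaller, and therefore minimal, energy value at every jump time.
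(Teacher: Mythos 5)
Your argument is correct, but note that the paper does not prove this lemma at all: it treats the statement as a standard property of $\BV$ functions and simply refers to the literature (the reference cited as~\cite{BV}, and elsewhere~\cite{natanson}). What you supply is the classical self-contained proof: the Jordan-type decomposition $E = V - (V-E)$ with $V(t)=\|E\|_{\text{TV}(0,t)}$, the sharp estimate $V(t)-V(s)\geq |E(t)-E(s)|$ which makes $V-E$ non-decreasing, the sup/inf argument giving one-sided limits of bounded monotone functions, and the jump-summation argument showing that the sets $D_n$ of jumps larger than $1/n$ are finite, hence the discontinuity set is countable; the discontinuity set of $E$ is then contained in the union of those of $V$ and $V-E$. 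All of these steps are sound, and you correctly isolate the only delicate point (monotonicity of $V-E$) and justify it by refining a partition of $[0,s]$ by the point $t$. The only difference in substance is one of economy: the paper buys brevity by outsourcing a textbook fact, while your route makes the lemma self-contained at the cost of a page of standard real analysis; the concluding sentences about choosing the right-continuous representative $E(t):=E(t+)$ are, as you say, purely definitional and need no proof.
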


\subsection{Definitions\label{sec:def}}
First we recall the Navier--Stokes  and Euler equations,
\begin{align}
\begin{split}
\t \f v +\di  ( \f v \otimes \f v) - \nu \Delta \f v + \nabla p = \f f \quad \text{and} \quad  \di \f v ={}& 0 \qquad \text{in }\Omega \times (0,T)\,,\\
\f v (0) ={}& \f v_0 \qquad \hspace*{-0.15cm}\text{in } \Omega \,,\\
\nu (I - \f n \otimes \f n ) \f v = 0 \quad \text{and} \quad 
\f n \cdot \f v   = {}& 0 \qquad \text{on }\partial \Omega\times (0,T) \,.
\end{split}\label{nav}
\end{align}

By writing the boundary conditions in this way, the system incorporates the Navier--Stokes system with no-slip conditions for $\nu>0$ and the Euler equations for $\nu=0$. Indeed, for $\nu>0$, the tangential and normal part of  the velocity field vanish such that this is equivalent to $\f v =0 $ on $\partial \Omega \times (0,T)$. For the case of $\nu =0$, \textit{i.e.}, no friction, only the normal component vanishes on the boundary. 
The underlying natural energy and dissipation spaces are given by   $ \mathbb{X}= L^\infty(0,T; \Ha)\cap L^2(0,T;\V)$ for $\nu>0$ and  $\mathbb{X}_0 =  L^\infty(0,T; \Ha) $ for $\nu =0$ and
the space of test-functions is given by 
 $\mathbb{Y}=  \mathbb{X}\cap L^2(0,T;H^2(\Omega)) \cap L^1(0,T;W^{1,\infty}(\Omega ))\cap H^1(0,T;(\Ha)^*) $ for $\nu>0$ and $\mathbb{Y}_0 = \mathbb{X}_0 \cap L^1(0,T;W^{1,\infty}(\Omega ) )\cap H^1 (0,T;(\Ha)^*)$ for $\nu=0$. 
 The space $\mathbb{Y}$ is chosen smooth enough such that the Stokes operator (for $\nu>0$) and the convection term map $\mathbb{Y} $ to $L^1(0,T;(\Ha)^*)$. 
The right-hand side $\f f$ is assumed to be in $\mathbb{Z}$, where 
$\mathbb{Z} := L^2 (0,T; H^{-1}(\Omega))\oplus H^1 (0,T;L^2(\Omega))$ for $\nu>0$ and  $\mathbb{Z}_0 := L^1(0,T;L^2(\Omega))$ for $\nu=0$.

We define the relative energy $\mathcal{R}:  \Ha \times \Ha \ra \R_+ $ by
\begin{subequations}\label{def:nav}
\begin{align}
\mathcal{R}(\f v|\vv) ={}&  \frac{1}{2}\| \f v - \vv \|_{L^2(\Omega)}^2 \,,
\intertext{and the system operator $\mathcal{A}_\nu:\mathbb{Y}\ra L^1(0,T;(\Ha)^*)$ via}
\langle \mathcal{A}_{\nu}(\vv) , \cdot \rangle ={}& \langle \t \vv + (\vv \cdot \nabla)\vv - \nu \Delta \vv  - \f f
, \cdot \rangle \,,\label{A}
\end{align}
\end{subequations}
which has to be understood in a weak sense, at least with respect to space. 

Note that the system operator does not include boundary conditions, since they are encoded in the underlying spaces. This may change for different boundary conditions. 

\begin{definition}\label{def:Kad}
We consider a form $\mathcal{K}: \Ha \ra [0,\infty]$ and define its domain by 
$\mathcal{D}(\mathcal{K}) := \{ \vv \in \mathbb{X}| \mathcal{K}(\vv) \in L^1(0,T)_+\}\,.
$ We assume that there is a fine enough  topology on $\mathcal{D}(\mathcal{K}) $ such that $\mathcal{K}$ is continuous  and $\C^1([0,T];\mathcal{C}_{c,\sigma}^\infty(\Omega;\R^d))$ is dense in $\mathcal{D}(\mathcal{K}) $ with respect to this topology. We always assume that $\mathcal{K}(0)=0$. 

The form $\mathcal{K}$ is called admissible for $\nu>0$, if the relative form $\mathcal{W}_{\nu}:
\mathbb{X}\times \mathbb{Y} \ra L^1(0,T)$ given by 
\begin{align}
\mathcal{W}_{\nu}(\f v | \vv) ={}& \nu \| \nabla \f v-\nabla \vv \|_{L^2(\Omega)}^2 - \int_\Omega  (( \f v-\vv) \cdot \nabla ) (\f v-\vv)  \cdot \vv \de \f x + \mathcal{K}(\vv) \mathcal{R}(\f v | \vv)  \label{Wnu}
\end{align}
is nonnegative for all $ \f v \in \mathbb{X}$ and all $\vv \in \mathbb{Y}\cap \mathcal{D}(\mathcal{K})$. 
Similarly, the form $\mathcal{K}_0$ is called admissible for $\nu=0$, if the relative form $\mathcal{W}_{0}:
\mathbb{X}_0\times \mathbb{Y} _0 \ra L^1(0,T)$ given by 
\begin{align}
\mathcal{W}_0(\f v | \vv) ={}& \int_\Omega ( \f v - \vv)^T \cdot ( \nabla \vv)_{\sym} ( \f v - \vv) \de \f x + \mathcal{K}_0(\vv) \mathcal{R}(\f v | \vv) \,.
\label{Wnull}
\end{align}
is nonnegative for all $ \f v \in \mathbb{X}_0$ and all $\vv \in \mathbb{Y}_0\cap \mathcal{D}(\mathcal{K}_0)$.
\end{definition}

\begin{example}
The standard example for a choice for $\mathcal{K}$ are the usual Serrin-type norms:
\begin{align}
\mathcal{K}(\vv)={}& \mathcal{K}_{\nu}^{s,r} (\vv)=c \| \vv\|_{L^{r}(\Omega)} ^{s} \quad \text{for } \frac{2}{s}+\frac{d}{r}= 1 \label{serrin}
\end{align}
with $r\in(d,\infty)$ and $s\in(2,\infty)$.
 Indeed 
H\"older's, Gagliardo--Nirenberg's, and Young's inequality provide the estimate for $\nu>0$
\begin{subequations}\label{West}
\begin{align}
\begin{split}
\left | \int_\Omega  (( \f v-\vv) \cdot \nabla ) (\f v-\vv)  \cdot \vv \de \f x \right |\leq{}&  \| \f v - \vv \|_{L^p(\Omega)} \| \nabla \f v - \nabla \vv \|_{L^2(\Omega)} \| \vv \|_{L^{2p/(p-2)}(\Omega)} \\ \leq{}& c_p \| \f v - \vv \|_{L^2(\Omega)} ^{(1-\alpha)}
 \| \nabla \f v - \nabla \vv \|_{L^2(\Omega)}^{(1+\alpha)}
  \| \vv \|_{L^{2p/(p-2)}(\Omega)} 
 \\ \leq{}& \frac{\nu}{2} \| \nabla \f v - \nabla \vv\|_{L^2(\Omega)}^2 + c \| \vv\|_{L^{2p/(p-2)}(\Omega)} ^{2/(1-\alpha)}
 \frac{1}{2} \| \f v - \vv \|_{L^2(\Omega)}^2 \,,
\end{split}\label{Kest}
\end{align}
where $\alpha $ is chosen according to Gagliardo--Nirenberg's inequality by $$\alpha = d (p-2)/2p\quad\text{for}\quad  d\leq 2p/(p-2)\,.$$ 
In the case of $\nu=0$, we may estimate 
\begin{align}
 \left ( ( \f v - \vv ) \otimes ( \f v - \vv) ; (\nabla \vv)_{\sym} \right )  \leq  2 | \vv |_{\C^{0,1}(\Omega)}
 \frac{1}{2}\| \f v - \vv \|_{L^2(\Omega)}^2 \,.
\end{align}
\end{subequations}

The estimate~\eqref{West} imply that $\mathcal{W}_\nu$ is nonnegative. 

\end{example}

\begin{remark}
\label{rem:W}
In contrast to previous publications, we want the form $\mathcal{K}$ to be general and not specifically chosen. This makes the solution concept of energy-variational solutions more selective and especially allows the vanishing viscosity limit in the proof of Theorem~\ref{thm:exist}. 
This makes no difference for the minimal energy-variational solution, since they turn out to be independent of the choice of $\mathcal{K}$. 
The form $\mathcal{K}$ is chosen in a way that the relative form
 $\mathcal{W}_\nu$  is nonnegative,  convex, and weakly-lower semi-continuous. 
 Indeed, since $\mathcal{W}_\nu$ is quadratic in $\f v$  and nonnegative, it is a standard matter to prove the  convexity of the mapping $\f v \mapsto \mathcal{W}_\nu(\f v | \vv) $. 
The mapping  $\f v \mapsto \mathcal{W}_\nu(\f v | \cdot)  $ is continuous in the strong topology in $\V$ and $\Ha$ for $\nu>0$ and $\nu=0$, respectively. Thus this mapping is weakly-lower semi-continuous (see for instance~\cite[Chap.~1, Cor.~2.2]{ekeland}). 

The assumption on the topology on $\mathcal{D}(\mathcal{K})$ and the continuity of $\mathcal{K}$ is of a technical nature. Instead of the choice $\mathcal{D}(\mathcal{K}) = L^2(0,T;L^\infty(\Omega) \cap \V )$ and $\mathcal{K}(\vv) = c \| \vv \|_{L^\infty(\Omega)}^2$ we rather chose the finer topology of $\mathcal{D}(\mathcal{K}) = L^2(0,T;\C_0 (\Omega) \cap \V )$ and $\mathcal{K}(\vv) =c  \| \vv \|_{\C (\Omega)}$ in the case $\nu>0$. 
It would also be possible to choose some intermediate separable space allowing jumps (see~\cite[Example~1.4.10 ]{RoubicekMeasure}).
For $\nu=0$, we choose instead of 
 $\mathcal{D}(\mathcal{K}) = L^1(0,T;W^{1,\infty}(\Omega) \cap \Ha )$ and $\mathcal{K}(\vv) = 2\| \nabla\vv \|_{L^\infty(\Omega)}$ the finer topology of $\mathcal{D}(\mathcal{K}) = L^1(0,T;\C^{0,1} ( \ov \Omega) \cap \Ha  )$ and $\mathcal{K}(\vv) =2 | \vv |_{\C^{0,1} (\ov\Omega)}$. These finer choices allow us to use the approximation property by density arguments. 
But also the case of the coarser topologies and associated $L^\infty$-norms could be made rigorous by an adapted method. 
\end{remark}

\begin{definition}[energy-variational solution]\label{def:diss} 
A tuplet $(\f v,\xi )$ is called an energy-variational solution, if $ (\f v,\xi  )\in  \mathbb{X}\times L^\infty(0,T)$  and  $\xi(t) \geq 0 $ for all~$t\in(0,T)$,  and for all admissible forms $\mathcal{K}:\mathbb{X} \supset \mathcal{D}(\mathcal{K})\ra L^1(0,T)_+$ according to Definition~\ref{def:Kad}, the relative energy inequality
\begin{multline}
\mathcal{R}(\f v( t+) | \vv(t) ) +\xi(t+)+ \int_s^t \left ( \mathcal{W}_\nu(\f v , \vv )   + \left \langle \mathcal{A}_\nu(\vv ) , \f v - \vv  \right \rangle - \mathcal{K}(\vv) (\mathcal{R}( \f v| \vv ) +\xi) \right )  \de \tau 
\\
\leq \mathcal{R}( \f v( s-) | \vv(s) )+ \xi (s-)  \label{relen}
\end{multline}
holds for all~$t$, $s\in [0,T]$ and for all $ \vv \in\mathbb{Y} \cap \mathcal{D}(\mathcal{K})$.
The initial value  $\f v(0)=\f v_0 $ is attained in the weak sense.
\end{definition} 
\begin{remark}[Properties of energy-variational solutions]\label{rem:semi}
By the Definition~\ref{def:diss} it is immediately clear that any energy-variational solution on an interval $[0,T]$ is also an energy-variational solution on any sub interval $(s,t)$ for all $s$, $t\in(0,T)$. Furthermore, let $(\f v^1, \xi^1)$ be an energy-variational solution on the interval $(0,t)$ and $(\f v^2, \xi^2)$ an energy-variational solution on the interval $(t,T)$ with $\f v^2(t) = \f v^1(t)$ and $\xi^2(t)\leq \xi^1(t)$. Then the concatenation of $(\f v^1, \xi^1)$ by $(\f v^2, \xi^2)$, \textit{i.e.}, the function $(\f v, \xi)$ given by 
\begin{align*}
\begin{cases}
(\f v (t), \xi(t)) = (\f v^1(t), \xi^1(t)) \quad &\text{for }t\in [0,t)\\
(\f v(t) , \xi(t) )= (\f v^2(t), \xi^2(t))  \quad & \text{for }t\in [t,T]
\end{cases}
\end{align*}
is again an energy-variational solution on $[0,T]$. This is the new key ingredient in comparisson to the previously introduced dissipative solutions~\cite{maxdiss}.

\end{remark}

\begin{remark}[Comparison to dissipative solutions]
Another difference of the proposed energy-var\-ia\-tion\-al solution framework in comparison to dissipative solutions lies in the definition of the relative form~$\mathcal{W}_{\nu}$. In dissipative solution concepts, the terms in the relative dissipation were only estimated from below by zero (see~\cite{lionsfluid} and~\cite{maxdiss}). The new insight is that these terms in $\mathcal{W}_\nu$ can be kept and don't have to be estimated. This also leads to the fact that the relative energy inequality is actually an equality for smooth solutions. Indeed in this case the energy inequality~\eqref{eninv} is an equality and thus also the relative energy inequality becomes an equality. 
Furthermore, for the regularity measure $\mathcal{K}$, we allow a family of functions. Finally, the introduction of the auxiliary variable $\xi$ allows to write down the relative energy inequality before applying the Gronwall argument and formulating the relative energy inequality on any subinterval of $[0,T]$. 




\end{remark}
\begin{corollary}[Refinement of dissipative solutions]
Let $(\f v , \xi) $ be an energy-variational solution according to Definition~\ref{def:diss} with $\xi (0) = 0$. Then $\f v $ is a dissipative solution, i.e. for an admissible $\mathcal{K}\geq 0$ according to Definition~\ref{def:Kad}, it holds that
\begin{align*}
\mathcal{R}(\f v( t) | \vv(t) ) + \int_0^t     \left \langle \mathcal{A}_\nu(\vv ) , \f v - \vv  \right \rangle \e^{\int_s^t\mathcal{K}(\vv)\de \tau} \de \tau 
\leq \mathcal{R}( \f v_0 | \vv(0) )  \e^{\int_0^t\mathcal{K}(\vv)\de s}\,,
\end{align*}
for a.e.~$t\in (0,T)$ and for all $ \vv \in \mathbb{Y}\cap \D $. Which is the definition according to Lions (see~\cite[Sec.~4.4]{lionsfluid}). This implies that in the case $\xi(0) = 0$, energy-variational solutions fulfill the so-called weak-strong uniqueness property. If a strong solution exists locally-in-time, every energy-variational solution coincides with this strong solution as long as the latter exists.
\end{corollary}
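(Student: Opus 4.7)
The plan is to derive the stated Gronwall-type inequality directly from the energy-variational inequality~\eqref{relen} by setting $s=0$, discarding the nonnegative term $\mathcal{W}_\nu$, and multiplying by a suitable integrating factor.

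First, fix $t\in(0,T)$ and specialize~\eqref{relen} to $s=0$: use the assumption $\xi(0)=0$ together with the weakly attained initial datum $\f v(0)=\f v_0$ on the right-hand side, and use $\mathcal{W}_\nu(\f v,\vv)\geq 0$ (which is precisely admissibility) to drop that term on the left-hand side. After moving $\int_0^t \mathcal{K}(\vv)(\mathcal{R}+\xi)\de\tau$ to the right, this produces
\[
\mathcal{R}(\f v(t+)|\vv(t))+\xi(t+)+\int_0^t\langle\mathcal{A}_\nu(\vv),\f v-\vv\rangle\de\tau\leq\mathcal{R}(\f v_0|\vv(0))+\int_0^t\mathcal{K}(\vv)\bigl(\mathcal{R}(\f v|\vv)+\xi\bigr)\de\tau.
\]

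Next, setting $y(t):=\mathcal{R}(\f v(t+)|\vv(t))+\xi(t+)$, I would note that $y(\tau)$ agrees with $\mathcal{R}(\f v(\tau)|\vv(\tau))+\xi(\tau)$ for a.e.~$\tau$, since both functions are of bounded variation and hence continuous outside a countable set. The preceding display thus becomes a standard linear integral inequality for $y$, to which Gronwall's lemma applies with integrating factor $e^{-\int_0^\tau\mathcal{K}(\vv)\de s}$; concretely, multiplying by this factor and integrating by parts in the Stieltjes sense (analogous to the reasoning employed in Lemma~\ref{lem:invar}) yields
\[
y(t)\leq\mathcal{R}(\f v_0|\vv(0))\,e^{\int_0^t\mathcal{K}(\vv)\de s}-\int_0^t\langle\mathcal{A}_\nu(\vv),\f v-\vv\rangle\,e^{\int_\tau^t\mathcal{K}(\vv)\de\sigma}\de\tau.
\]
Dropping the nonnegative contribution $\xi(t+)\geq 0$ on the left yields the claimed dissipative-solution inequality.

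For the weak-strong uniqueness assertion, any local-in-time strong solution $\vv\in\mathbb{Y}\cap\mathcal{D}(\mathcal{K})$ of~\eqref{nav} satisfies $\mathcal{A}_\nu(\vv)\equiv 0$ pointwise; if additionally $\vv(0)=\f v_0$ then $\mathcal{R}(\f v_0|\vv(0))=0$, so the above reduces to $\mathcal{R}(\f v(t)|\vv(t))\leq 0$, forcing $\f v\equiv\vv$ for all $t$ in the life-span of $\vv$. The main technical subtlety I anticipate is justifying the Gronwall step cleanly when $y$ is merely of bounded variation: one must reconcile the right-continuous pointwise values $y(t)$ appearing outside the integral with the almost-everywhere values of $\mathcal{R}(\f v|\vv)+\xi$ used inside, which is routine once the BV regularity supplied by~\eqref{relen} itself is exploited.
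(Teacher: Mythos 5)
Your proposal is correct and follows essentially the same route as the paper: specialize~\eqref{relen} to $s=0$ with $\xi(0)=0$, discard the nonnegative term $\mathcal{W}_\nu$, apply Gronwall's lemma to $\mathcal{R}+\xi$, and finally drop $\xi\geq 0$. The paper's proof is just a terser version of the same argument (it does not spell out the Stieltjes/BV justification of the Gronwall step or the weak--strong uniqueness consequence, which you supply correctly).
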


\begin{proof}
Let $(\f v , \xi) $ be a energy-variational solution according to Definition~\ref{def:diss}. 
 From the condition on the initial values, we infer
\begin{multline*}
\mathcal{R}(\f v( t) | \vv(t) ) +\xi(t)+ \int_0^t \left ( \mathcal{W}_\nu(\f v , \vv )   + \left \langle \mathcal{A}_\nu(\vv ) , \f v - \vv  \right \rangle - \mathcal{K}(\vv) (\mathcal{R}( \f v| \vv ) +\xi) \right )  \de \tau 
\\
\leq \mathcal{R}( \f v_0 | \vv(0) )  
\end{multline*}
for a.e.~$t\in (0,T)$ and for all $ \vv \in \mathbb{Y}\cap \mathcal{D}(\mathcal{K})$.
Gronwall's inequality and the property that $\xi \geq 0$ as well as $\mathcal{W}_\nu\geq 0$, implies the assertion. 
\end{proof}
\begin{corollary}[Additional regularity]\label{cor:addreg}
Let $(\f v , \xi) $ be an energy-variational solution according to Definition~\ref{def:diss}. 
Then the function $E(t):= 1/2\| \f v(t) \|_{L^2(\Omega)}^2 + \xi(t)$ is of bounded variation on $[0,T]$ and furthermore,   $ \f v \in H^1(0,T;(\V\cap W^{2,p}(\Omega))^*) \cap \C_w([0,T];\Ha)$
Note that the bounds of the associated estimates only depend on the initial values $\f v_0$ and $\xi(0)$ as well as the right-hand side $\f f\in \mathbb{Z}_\nu$.  This additional regularity allows tp formulate the relative energy inequality~\eqref{relen} everywhere in $[0,T]$ instead of almost everywhere. 

\end{corollary}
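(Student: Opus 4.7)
The plan is to proceed in three steps: first a zero-order estimate by testing with the trivial function that produces the $\BV$-property; second a linearization in the test function that extracts a weak momentum equation from~\eqref{relen}; third a standard duality estimate for $\partial_t\f v$ that gives both the $H^1$-regularity in time and the weak continuity into~$\Ha$.

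Step one: insert $\vv\equiv 0$ in~\eqref{relen}, which is admissible since $\mathcal{K}(0)=0$. Then $\mathcal{R}(\f v\mid 0)=\tfrac12\|\f v\|_{L^2(\Omega)}^2$, $\mathcal{A}_\nu(0)=-\f f$, and $\mathcal{W}_\nu(\f v,0)=\nu\|\nabla\f v\|_{L^2(\Omega)}^2$, so~\eqref{relen} reduces to
\[
E(t+)+\nu\int_s^t\|\nabla\f v\|_{L^2(\Omega)}^2\de\tau\leq E(s-)+\int_s^t\langle\f f,\f v\rangle\de\tau
\]
for all $0\leq s\leq t\leq T$. Setting $G(t):=E(t)-\int_0^t\langle\f f,\f v\rangle\de\tau$, the inequality reads $G(t+)\leq G(s-)$, so in its right-continuous representation $G$ is non-increasing and thus in $\BV$ with $\|G\|_{TV(0,T)}\leq G(0)-G(T)$ by the monotonicity observation preceding Lemma~\ref{lem:invar}. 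Since the correction $\int_0^{\,\cdot}\langle\f f,\f v\rangle\de\tau$ is absolutely continuous with bounds depending on $\|\f f\|_{\mathbb{Z}_\nu}$ and $\|\f v\|_{\mathbb{X}_\nu}$ (themselves controlled by $\f v_0,\xi(0),\f f$ via the same $\vv=0$ estimate), the claim $E\in\BV$ with the asserted bound follows.

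Step two: to linearize~\eqref{relen}, fix a Serrin-type admissible form as in~\eqref{serrin} with $s\geq 2$, and for arbitrary $\psi\in\mathcal{C}^1([0,T];\mathcal{C}^\infty_{c,\sigma}(\Omega;\R^d))\subset\mathbb{Y}\cap\mathcal{D}(\mathcal{K})$ insert $\vv=\varepsilon\psi$. Expand each term of~\eqref{relen} in powers of $\varepsilon$: the $O(1)$ pieces reproduce the inequality of step one, while $\mathcal{K}(\varepsilon\psi)(\mathcal{R}+\xi)=O(\varepsilon^s)=o(\varepsilon)$. Using $\di\f v=0$ to rewrite $-\int(\f v\cdot\nabla)\f v\cdot\psi=\int_\Omega\f v\otimes\f v{:}\nabla\psi\de\f x$ and $\langle\Delta\psi,\f v\rangle=-(\nabla\psi,\nabla\f v)$, collecting the $O(\varepsilon)$ contributions and dividing by $\varepsilon>0$, the limit $\varepsilon\to 0^+$ yields
\[
(\f v(s-),\psi(s))-(\f v(t+),\psi(t))+\int_s^t\!\Big(\langle\partial_t\psi,\f v\rangle-\nu(\nabla\f v,\nabla\psi)+\int_\Omega\f v\otimes\f v{:}\nabla\psi\de\f x+\langle\f f,\psi\rangle\Big)\de\tau\leq 0.
\]
Replacing $\psi$ by $-\psi$ gives the reverse inequality, so this is an equality — the standard weak formulation of the momentum equation, valid on every subinterval $[s,t]\subset[0,T]$.

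Step three: testing the identity of step two against $\psi\in X:=\V\cap W^{2,p}(\Omega)$ with $p>d$ (so $\|\nabla\psi\|_{L^\infty}\leq C\|\psi\|_X$ by Sobolev embedding) yields the duality bound $|\langle\partial_t\f v,\psi\rangle|\leq \|\f f\|_{H^{-1}+L^2}\|\psi\|_X+\nu\|\nabla\f v\|_{L^2}\|\nabla\psi\|_{L^2}+\|\f v\|_{L^2}^2\|\nabla\psi\|_{L^\infty}$, so that $\partial_t\f v\in L^2(0,T;X^*)$ with norm controlled by $\f v_0,\xi(0),\f f$. Combined with the $L^\infty(0,T;\Ha)$-bound and the density of $X$ in $\Ha$, a classical approximation argument upgrades this to $\f v\in\mathcal{C}_w([0,T];\Ha)$. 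Selecting the right-continuous representative of $E$, every pointwise value of $\f v(t)$ and $\xi(t)=E(t)-\tfrac12\|\f v(t)\|_{L^2}^2$ is well-defined, so~\eqref{relen} becomes meaningful pointwise on $[0,T]$. The main obstacle is step two: extracting a genuine weak momentum equation from the defect-laden inequality~\eqref{relen} succeeds only because the freedom of choosing an admissible~$\mathcal{K}$ (Definition~\ref{def:Kad}) permits a form for which the regularization contribution is subleading in $\varepsilon$; once this is in place, steps one and three are essentially standard \emph{a priori} estimates.
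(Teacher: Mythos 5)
Your steps one and three essentially reproduce the paper's argument (the choice $\vv=0$ for the $\BV$ property, the duality bound for $\t\f v$, and a Simon-type lemma for $\C_w([0,T];\Ha)$), but step two contains a genuine error on which your step three relies. Writing~\eqref{relen} with $\vv=\varepsilon\psi$ as $A_0+\varepsilon A_1+o(\varepsilon)\leq 0$, where $A_0$ is exactly the left-minus-right side of your step-one inequality and $A_1$ collects the terms linear in $\psi$, you cannot simply ``collect the $O(\varepsilon)$ contributions'': dividing by $\varepsilon$ gives $A_1\leq -A_0/\varepsilon+o(1)$, and since $A_0\leq 0$ need not vanish (the step-one inequality is in general not saturated; in particular the defect $\xi$ may jump downwards), the right-hand side tends to $+\infty$ as $\varepsilon\searrow 0$ and no information on $A_1$ survives. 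Your conclusion would in fact show that \emph{every} energy-variational solution satisfies the weak momentum equation on every subinterval, which contradicts the structure of the paper: that statement is Proposition~\ref{prop:equation} and requires $\xi\equiv 0$; for merely rank-one homogeneous $\mathcal{K}$ one only gets the ball characterization of Proposition~\ref{prop:homo}, and it is precisely the point of Theorem~\ref{thm:maxdiss} that the weak formulation is recovered only after the minimality selection forces $\xi=0$. The scaling that does exploit linearity in the test function is $\vv=\alpha\tu$ with $\alpha\to\infty$ (so the quadratic energy terms are suppressed after dividing by $\alpha$), but there your superlinear Serrin-type $\mathcal{K}$ blows up, so this route cannot eliminate the defect either.

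The corollary does not need a momentum equation at all; the paper obtains the time-derivative bound as a one-sided estimate directly from~\eqref{relen}. Take $s=0$, $t=T$, the $1$-homogeneous choice $\mathcal{K}(\vv)=2|\vv|_{\C^{0,1}(\ov\Omega)}$, and isolate the distributional pairing $-\int_0^T\langle\t\f v,\vv\rangle\de t=-(\f v(T),\vv(T))+(\f v_0,\vv(0))+\int_0^T(\t\vv,\f v)\de t$ as in~\eqref{timederi}; every remaining term, including the defect contribution $\int_0^T\xi\,\mathcal{K}(\vv)\de t\leq 2\|\xi\|_{L^\infty(0,T)}\|\vv\|_{L^1(0,T;\C^{0,1}(\ov\Omega))}$, is bounded by norms of $\vv$ times constants depending only on the \textit{a priori} bounds from step one. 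Since the admissible test set is symmetric under $\vv\mapsto-\vv$ and the bound depends only on norms of $\vv$, taking the supremum yields $\t\f v\in L^2(0,T;(\V\cap W^{2,p}(\Omega))^*)$ for $p>d$, and your step three then goes through verbatim with this one-sided bound in place of the unavailable weak formulation.
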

\begin{proof}
Let $E$ be given as above, by choosing $\vv=0$ in~\eqref{relen}, we find that
\begin{align}
E(t)-E(s) + \int_s^t \nu \| \nabla \f v\|^2_{L^2(\Omega)} - \langle \f f , \f v \rangle \de \tau \leq 0\,.\label{monotone}
\end{align}
for a.e.~$s$, $t\in(0,T)$. 
This implies that the function $t \mapsto  E(t) + \int_0^t \nu \|\nabla  \f v\|_{L^2(\Omega)}^2 - \langle \f f , \f v \rangle \de s$ is a monoton\-ous\-ly non-increasing function and thus a function of bounded variation~\cite{natanson}.

For $\f f \in \mathbb{Z}_\nu = L^2(0,T;H^{-1}(\Omega)) \oplus L^1(0,T;  L^2(\Omega)) $,  there exist  $\f f_1\in L^2(0,T; H^{-1}(\Omega)) $ and $\f  f_2 \in L^1(0,T;  L^2(\Omega))$ such that
we may estimate with H\"older's, Young's, and {Poincar\'e}'s inequality 
\begin{align}
\langle \f f , \f v \rangle \leq \frac{\nu }{2}\|\nabla \f v \|_{L^2(\Omega)}^2 + \frac{C}{2\nu }\| \f f _1 \|_{ H^{-1}(\Omega)} ^2 + \frac 1 2 \| \f f_2 \|_{L^2(\Omega)}  \left  (\| \f v \|_{L^2(\Omega)}^2+1\right ) \,.\label{rhs}
\end{align}
Inserting this into~\eqref{monotone} for $s=0$ and using $\xi \geq 0$, implies that 
\begin{multline*}
\| \f v (t) \|_{L^2(\Omega)}^2 + \int_0^t  \nu \| \nabla \f v\|^2_{L^2(\Omega)} \de s 
\\
\leq \| \f v_0\|_{L^2(\Omega)}^2+2 \xi (0)  + \int_0^t\frac{C}{\nu }\| \f f _1 \|_{ H^{-1}(\Omega)} ^2 +  \| \f f_2 \|_{L^2(\Omega)}  \left  (\| \f v \|_{L^2(\Omega)}^2+1\right ) \de s \,. 
\end{multline*}
Via Gronwall's lemma we infer $\f f$, $\f v_0$ and $\xi(0)$-dependent bounds on $\f v$ in $ \mathbb{X}$.

Since for $\f v \in \mathbb{X}$ and $\f f \in \mathbb{Z}$, the function
$\nu \|\nabla  \f v\|_{L^2(\Omega)}^2 - \langle \f f , \f v \rangle$ is integrable such that 
$\int_0^t \nu \|\nabla  \f v\|_{L^2(\Omega)}^2 - \langle \f f , \f v \rangle \de s$ is absolutely continuous and thus of bounded variation. 
The sum and difference of functions of bounded variation are known to be of bounded variation again (see~\cite[Chap.~8,Thm.~3]{natanson}), such that the first assertion follows, where the associated bound on $\| E\|_{\text{TV}(0,T)}$ again depends on $E(0)=1/2\| \f v_0 \|_{L^2(\Omega)}^2+ \xi(0)$ and $\f f$. 

Now choosing $t=T$ and $s=0$ in~\eqref{relen} as well as $\mathcal{K}(\vv) = 2 | \vv |_{\C^{0,1}(\Omega)}$ , we find
\begin{align}\label{timederi}
\begin{split}
- \int_0^T \langle \t \f v , \vv \rangle \de t ={}&-(\f v(T), \vv(T)) + ( \f v_0, \vv(0)) + \int_0^T ( \t \vv , \f v ) \de t \\ \leq{}& \int_0^T \nu ( \nabla \f v , \nabla \vv ) + ( ( \f v \cdot \nabla) \f v , \vv ) - \langle \f f , \vv \rangle - \xi \mathcal{K}(\vv) \de t 
\\
&- \left ( E (T)-E(0) + \int_0^T \nu \| \nabla \f v \| _{L^2(\Omega)}^2 - \langle \f f , \f v \rangle \de t \right )\,.
\end{split}
\end{align}
The second line on the right-hand side is known to be bounded and for the first line, we observe
\begin{multline*}
\int_0^T \nu ( \nabla \f v , \nabla \vv ) + ( ( \f v \cdot \nabla) \f v , \vv ) - \langle \f f , \vv \rangle - \xi \mathcal{K}(\vv) \de t \\
\leq \nu \| \nabla \f v \|_{L^2(\Omega\times (0,T))} \| \nabla \vv \|_{L^2(\Omega\times (0,T))} + \| \f v \|_{L^\infty(0,T;L^2(\Omega))}^2  \| \vv \|_{L^1(0,T;W^{1,\infty}(\Omega))} \\ + \| \f f \|_{\mathbb{Z}}\left (\nu\| \nabla \vv \|_{L^2(\Omega\times (0,T))} +\| \f v \|_{L^\infty(0,T;L^2(\Omega))}  \right )  +2  \| \xi \|_{L^\infty(0,T)} \| \vv \| _{L^1(0,T;\C^{0,1}(\ov\Omega))} 
 \,.
\end{multline*}
On the left-hand side of the  inequality~\eqref{timederi}, the definition of the weak-time derivative appears. On the right-hand side, the terms depending on $\f v$ and $\xi $ are bounded. 
Taking the supremum over all test functions, we observe that
\begin{align*}
\| \t \f v \| _{L^2(0,T;((\V\cap W^{2,p}(\Omega))^*)} = \sup_{ {\scriptsize\begin{matrix}
\vv \in  L^2(0,T;\V\cap W^{2,p}(\Omega)),\\ \| \vv \| _{L^2(0,T;\V\cap W^{2,p}(\Omega))}=1
\end{matrix}} }-  \langle  \partial _t \f v , \vv \rangle \leq C \quad \text{for }p>d \,. 
\end{align*}
From a standard lemma, we infer since $\Ha$ is reflexive that
\begin{align*}
\C_w([0,T];(\V\cap W^{2,p}(\Omega))^*) \cap L^\infty(0,T;\Ha) \subset \C_w([0,T];\Ha)\,
\end{align*}
and from this that $\f v \in \C_w([0,T];\Ha)$ (see~\cite{simon}). Thus the pointwise evaluation in~\eqref{relen} is well-defined. 

\end{proof}

\begin{definition}[Minimal energy-variational solution]\label{def:ad}
A tuplet $( \f v, \xi  )$ is called a minimal energy-var\-ia\-tio\-nal  solution,  if $ (\f v, \xi  ) $ is an energy-variational solution according to Definition~\ref{def:diss}  and if 
\begin{align*}
\frac{1}{2}\| \f v(t+)\|_{L^2(\Omega)}^2 +\xi(t+)= E(t+) \leq \bar{ E}(t+) = \frac{1}{2}\| \bar{\f v}(t+)\|_{L^2(\Omega)}^2 + \bar\xi(t+)
\end{align*}
for all $t\in [0,T]$ and all energy-variational solutions $ (\bar{\f v}, \bar\xi  )\in   \mathbb{X} $ according to Definition~\ref{def:diss} for a given initial value $\f v_0\in\Ha$ and right-hand side $\f f \in \mathbb{Z} $. 
\end{definition}

\begin{remark}[Selection criterion]
The proposed selection criterion relies on the insight that a physically relevant solution dissipates energy at the highest rate~(see~\cite{maximaldiss} or~\cite{dafermosscalar}). This leads to a minimized energy (compare the energy inequality~\eqref{eninv}, which is formally an equality). In a thermodynamical consistent system, the energy would be constant, but the maximized dissipation leads to a maximized entropy~(see~\cite{generic} for instance). This criterion was introduced as the entropy rate admissibility criterion~\cite{dafermosscalar}. 
There are different works on the entropy rate admissibility criterion applied to different systems. 
For instance, in the case of scalar conservation laws it was shown that this criterion coincides with the Oleinik-E condition and thus the usual entropy admissibility criterion for solutions with finitely many shocks~(see~\cite{dafermosscalar} or~\cite[Thm.~9.7.2]{dafermosbook} for the result). 
Since this criterion was proven to select the physically relevant solution in these scarcely available examples of nonlinear PDEs that are well understood, it may also does this for more involved systems (like the ones we consider here). 
One may chooses different selection criteria. Indeed, the local minimization should be equivalent to defining a global minimization, the energy-variational solution $(\f v ,\xi )$ minimizing  $\int_0^T \mathcal{E}\de t = \int_0^T \frac{1}{2}\| \f v \| _{L^2(\Omega)}^2 + \xi \de t $. The local minimizer is also a global minimizer by construction and since the local minimizer is in the admissible set of the global optimization problem both minima are equal up to choosing the correct representative in the associated equivalence class of a.e.~equal functions. Here, I stick to the local selection principle since it follows the physical principle of maximal dissipation more closely and the proof of the main result is more obvious. 

\end{remark}
\begin{remark}
We note that the Definition~\ref{def:ad} is well defined. Due to the additional regularity deduced in Corollary~\ref{cor:addreg}, we know that the function $E$ for an energy-variational solution $(\f v , \xi)$ according to Definition~\ref{def:diss} is a $\BV$ function and thus the limit $\lim _{s\searrow t} E(s) = E(t+) $ exists and is unique for all $t\in[0,T)$. 
That a solution fulfilling this criterion exists and is unique is provided in Theorem~\ref{thm:maxdiss}. 
Note that due to the inequality~\eqref{monotone}, only negative jumps with $\lim _{s\searrow t} E(s) = E(t+) \leq \lim _{s \nearrow t} E(s) = E(t-)$ are allowed. The increasing contribution to $E$ are only due to the right-hand side $\f f$ and thus, by construction absolutely continuous. 
Note also that the pointwise minimization in the case of $t=0$ immediately implies that $E(0) = 1/2 \| \f v_0\|_{L^2(\Omega)}^2 $, since the relative-energy inequality is automatically fulfilled for $t=s=0$.

\end{remark}

\begin{definition}[weak solution]\label{def:weak} 
A function $\f v$ is called a weak solution with the strong energy inequality, if $ \f v \in \mathbb{X}$ fulfills the  strong energy inequality
\begin{align}
\frac{1}{2}\| \f v \|_{L^2(\Omega)}^2 \Big|_s^t +\int_s^t \nu \| \nabla \f v \|_{L^2(\Omega)}^2 \de \tau \leq \int_s^t \langle \f f , \f v \rangle \de \tau \quad \text{for a.e.~}s<t\in (0,T)\,\label{eninv}
\end{align}
and the weak formulation
\begin{align}
-\int_0^ T \int_\Omega \f v \t \f \varphi  \de \f x \de t + \int_0^T \int_\Omega \left ( \nu \nabla \f v : \nabla \f \varphi - ( \f v \otimes  \f v ) :\nabla  \f \varphi \right ) \de \f x \de t = \int_0^T \langle \f f , \f \varphi \rangle \de t + \int_\Omega \f v_0 \cdot\f \varphi(0) \de \f x 
\label{weakv}
\end{align}
for every $\f \varphi \in\C^1_c([0,T)) \otimes \mathcal{C}_{c,\sigma}^\infty(\Omega;\R^{d})$.
\end{definition}

\subsection{Main results\label{sec:main}}
The main results of the paper at hand are the following.

 \begin{proposition}\label{prop:diss}
 Let $\f v $ be a weak solution according to Definition~\ref{def:weak}. 
 Then $\f v$ with $\xi = 0$ is an energy-variational solution according to Definition~\ref{def:diss}.

 \end{proposition}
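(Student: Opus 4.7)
The strategy is to set $\xi\equiv 0$, observe that the two occurrences of $\mathcal{K}(\vv)\mathcal{R}(\f v|\vv)$ in~\eqref{relen} (one inside $\mathcal{W}_\nu$, one subtracted explicitly) cancel, and then derive the resulting inequality directly from~\eqref{eninv} and~\eqref{weakv}. I split
\begin{align*}
\tfrac{1}{2}\|\f v-\vv\|_{L^2(\Omega)}^2\Big|_s^t = \tfrac{1}{2}\|\f v\|_{L^2(\Omega)}^2\Big|_s^t - (\f v,\vv)\Big|_s^t + \tfrac{1}{2}\|\vv\|_{L^2(\Omega)}^2\Big|_s^t
\end{align*}
and bound the three contributions separately: the first by the strong energy inequality~\eqref{eninv}, the second by testing~\eqref{weakv} with the time-dependent function $\vv$ and integrating by parts in time, yielding an identity for $(\f v,\vv)|_s^t$ in terms of $\int_s^t \bigl[(\f v,\t\vv) + (\f v\otimes\f v,\nabla\vv) - \nu(\nabla\f v,\nabla\vv) + \langle\f f,\vv\rangle\bigr]\de\tau$, and the third by the chain rule $\tfrac{1}{2}\|\vv\|_{L^2(\Omega)}^2\big|_s^t = \int_s^t \langle\t\vv,\vv\rangle\,\de\tau$, which is available since $\vv\in H^1(0,T;(\Ha)^*)\cap L^\infty(0,T;\Ha)$.

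Adding these three contributions to the integrand $\mathcal{W}_\nu(\f v,\vv) + \langle\mathcal{A}_\nu(\vv),\f v-\vv\rangle$, a direct bookkeeping shows that the $\nu$-terms, the force terms, and the time-derivative terms cancel pairwise. The three trilinear contributions, namely $-(\f v\otimes\f v,\nabla\vv)$ from testing~\eqref{weakv}, the term $-\int_\Omega((\f v-\vv)\cdot\nabla)(\f v-\vv)\cdot\vv\,\de\f x$ from $\mathcal{W}_\nu$, and $((\vv\cdot\nabla)\vv,\f v-\vv)$ from $\mathcal{A}_\nu(\vv)$, collapse to zero after invoking the divergence-free identities $((\f w\cdot\nabla)\f a,\f b) = -((\f w\cdot\nabla)\f b,\f a)$ and $((\f w\cdot\nabla)\f w,\f w) = 0$ for $\f w$ solenoidal (applied to $\f w=\f v$ and $\f w=\vv$). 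The remaining inequality coincides with~\eqref{relen} at $\xi\equiv 0$, with the single $\leq$-sign supplied entirely by~\eqref{eninv}.

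The main technical obstacle is that the weak formulation~\eqref{weakv} is stated for test functions of the form $\C^1_c([0,T))\otimes\mathcal{C}^\infty_{c,\sigma}(\Omega;\R^d)$, whereas I wish to use $\vv\in\mathbb{Y}\cap\D$ as a test on an arbitrary subinterval $[s,t]\subset[0,T]$. This is overcome by (i) the density of $\C^1([0,T];\mathcal{C}^\infty_{c,\sigma}(\Omega;\R^d))$ in $\mathbb{Y}\cap\D$ postulated in Definition~\ref{def:Kad}; (ii) the integration by parts in time, legitimate because $\vv\in H^1(0,T;(\Ha)^*)$ and $\f v\in L^\infty(0,T;\Ha)$; and (iii) the weak continuity $\f v\in\C_w([0,T];\Ha)$, a standard consequence of~\eqref{weakv} and the $\mathbb{X}$-bound from~\eqref{eninv}, which identifies $\f v(t+)=\f v(t)=\f v(t-)$ for every $t\in[0,T]$ and makes the pointwise boundary values in~\eqref{relen} unambiguous. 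Once these three points are settled, the proof reduces to the algebraic manipulation sketched above.
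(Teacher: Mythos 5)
Your proposal is correct and takes essentially the same route as the paper: both derive~\eqref{relen} with $\xi\equiv 0$ by combining the strong energy inequality~\eqref{eninv}, the weak formulation~\eqref{weakv} tested (after a density/approximation step) with $\vv$, and the energy identity for $\vv$ itself, after which the viscous, force and time-derivative terms cancel and the convective terms are handled by the skew-symmetry of the trilinear form, with the two $\mathcal{K}(\vv)\mathcal{R}(\f v|\vv)$ contributions cancelling at $\xi=0$. The only refinement worth noting is that for $\nu=0$ the convective bookkeeping must be carried out in the divergence form leading to $\left((\f v-\vv)\otimes(\f v-\vv),(\nabla\vv)_{\sym}\right)$, since $\nabla\f v$ is not available for $\f v\in L^\infty(0,T;\Ha)$ --- which is precisely the identity the paper records in its own proof.
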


  \begin{proposition}\label{prop:homo}
 Let $(\f v, \xi ) \in  \mathbb{X}\cap L^\infty(0,T) $
 be an energy-variational  solution solution according to Definition~\ref{def:diss}. 
 Assume that the regularity measure $\mathcal{K}$ is homogeneous of rank one, \textit{i.e.,} 
$\mathcal{K}(\alpha \vv) = | \alpha | \mathcal{K}(\vv) $ for all $\alpha\in\R$.  
 Then it holds that 
 \begin{subequations}
 \begin{align}
 \left (\frac{1}{2}\| \f v\|_{L^2(\Omega)}^2+\xi \right ) \Big|_q^r + \int_q^r \nu \| \nabla \f v\|_{L^2(\Omega)}^2 - \langle \f f , \f v \rangle \de \tau \leq {}&0\,
 \quad\text{for a.e.~} q<r\in (s,t)
 \label{eneq1}
 \intertext{and}
 \int_s^t \left (  \nu \left ( \nabla \f v , \nabla \vv\right ) - \left ( \f v \otimes \f v ; \nabla \vv\right ) - \left ( \f v , \t \vv \right ) - \langle \f f , \vv \rangle\right )  \de \tau
 \in {}& B \left ( 0 ,\int_s^t \mathcal{K}(\vv ) \xi \de \tau \right )   \,\label{weakeq1}
 \end{align}
 for all $\vv\in\C^1_c((s,t);\C^\infty_{0,\sigma}(\Omega))\cap \D$ and all $s<t\in(0,T)$. Here $B \left ( 0 ,r\right )$ denotes the ball in $\R $ around $0$ with radius $r$. 

  \end{subequations}
 
 \end{proposition}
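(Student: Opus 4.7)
The plan is to test the relative energy inequality~\eqref{relen} with cleverly scaled smooth fields and then let the scaling parameter blow up, exploiting the homogeneity of $\mathcal{K}$.

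For part~(a) I would simply take $\vv \equiv 0$ in~\eqref{relen}, which is admissible since $\mathcal{K}(0)=0$. With this choice $\mathcal{R}(\f v|0) = \tfrac12\|\f v\|_{L^2(\Omega)}^2$, $\mathcal{W}_\nu(\f v|0) = \nu\|\nabla\f v\|_{L^2(\Omega)}^2$, and $\langle \mathcal{A}_\nu(0),\f v\rangle = -\langle\f f,\f v\rangle$, so~\eqref{relen} reduces to~\eqref{eneq1}; its validity on any sub-interval $(q,r)\subset(s,t)$ follows from the semi-flow property (Remark~\ref{rem:semi}).

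For part~(b) the essential idea is to substitute $\alpha\vv$ for $\vv$ in~\eqref{relen} with $\alpha\in\R$ arbitrary and let $|\alpha|\to\infty$. The homogeneity $\mathcal{K}(\alpha\vv)=|\alpha|\mathcal{K}(\vv)$ has two crucial effects. First, the $\mathcal{K}(\alpha\vv)\mathcal{R}(\f v|\alpha\vv)$ term sitting inside $\mathcal{W}_\nu$ (see~\eqref{Wnu}) cancels exactly the corresponding term appearing explicitly in~\eqref{relen}, leaving only $-|\alpha|\int_s^t \mathcal{K}(\vv)\xi\,d\tau$. Second, after this cancellation the remaining expression becomes polynomial in $\alpha$ plus the single $|\alpha|$-term. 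Using $\di\f v=0$, $\di\vv=0$, and the compact support of $\vv$ in $\Omega$ to integrate by parts freely, the order-$\alpha^2$ contributions collapse via the identity $\int_\Omega(\vv\cdot\nabla)\f v\cdot\vv+\int_\Omega(\vv\cdot\nabla)\vv\cdot\f v=0$ and the cancellation of the $\alpha^2\nu\|\nabla\vv\|_{L^2(\Omega)}^2$ pieces arising in $\mathcal{W}_\nu$ and $\mathcal{A}_\nu(\alpha\vv)$ to just $-\tfrac{\alpha^2}{2}\partial_t\|\vv\|_{L^2(\Omega)}^2$, whose time integral over $(s,t)$ vanishes because $\vv(s)=\vv(t)=0$. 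What remains of~\eqref{relen} then has the form
\begin{align*}
\alpha B - |\alpha| C \leq A,
\end{align*}
where $C := \int_s^t \mathcal{K}(\vv)\xi\,d\tau \geq 0$, the quantity $B$ is precisely (up to sign) the left-hand side of~\eqref{weakeq1}, and $A := [E(s-)-E(t+)] - \int_s^t(\nu\|\nabla\f v\|_{L^2(\Omega)}^2-\langle\f f,\f v\rangle)\,d\tau \geq 0$ by part~(a). Sending $\alpha\to+\infty$ yields $B\leq C$ and $\alpha\to-\infty$ yields $-B\leq C$, so $|B|\leq C$, which is~\eqref{weakeq1}.

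The main delicate point will be the bookkeeping of the $O(\alpha^2)$ contributions: the full cancellation of the quadratic convective terms (coming from $\di\vv=0$) and of the $\nu\|\nabla\vv\|^2$ terms (coming from the Stokes part of $\mathcal{A}_\nu$ pitted against the expansion of $\nu\|\nabla\f v-\alpha\nabla\vv\|^2$) is what makes the $|\alpha|\to\infty$ limit meaningful; without it, the $O(\alpha^2)$ terms would dominate and only trivial information could be extracted. Everything else in the argument is routine expansion and the elementary observation that an inequality of the form $\alpha B - |\alpha|C\leq A$ valid for all $\alpha\in\R$ with $A,C\geq0$ forces $|B|\leq C$.
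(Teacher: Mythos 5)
Your proposal is correct and follows essentially the same route as the paper's proof: choosing $\vv=0$ in~\eqref{relen} gives the energy inequality, and replacing $\vv$ by $\alpha\vv$, exploiting the rank-one homogeneity of $\mathcal{K}$, the cancellation of the $\mathcal{K}\mathcal{R}$-terms and of all super-linear contributions (skew-symmetry, solenoidality, $\vv(s)=\vv(t)=0$), and then letting the parameter blow up yields the two-sided bound~\eqref{weakeq1}. The only cosmetic difference is that the paper first rewrites~\eqref{relen} via Lemma~\ref{lem:invar}, multiplies by $1/\alpha$ with $\alpha>0$ and inserts $\pm\vv$ in the linear limit inequality, whereas you keep signed $\alpha$ directly.
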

  \begin{proposition}\label{prop:equation}
 Let $(\f v, \xi ) \in  \mathbb{X}\cap L^\infty(0,T) $
 be an energy-variational  solution solution according to Definition~\ref{def:diss}. 
 Assume that $\xi (\tau) = 0 $ for a.e.~$\tau \in (s,t)$, with 
 $s<t\in (0,T)$. 
 Then it holds that 
 \begin{subequations}
 \begin{align}
 \frac{1}{2}\| \f v\|_{L^2(\Omega)}^2\Big|_q^r + \int_q^r \nu \| \nabla \f v\|_{L^2(\Omega)}^2 - \langle \f f , \f v \rangle \de \tau \leq {}&0\,
 \quad\text{for a.e.~} q<r\in (s,t)
 \label{eneq}
 \intertext{and}
 \int_s^t\left (  \nu \left ( \nabla \f v , \nabla \vv\right ) - \left ( \f v \otimes \f v ; \nabla \vv\right ) - \left ( \f v , \t \vv \right ) - \langle \f f , \vv \rangle\right )  \de \tau
 ={}& 0 \,\label{weakeq}
 \end{align}
 for all $\vv\in\C^1_c((s,t);\C^\infty_{0,\sigma}(\Omega))$.
  \end{subequations}
   \end{proposition}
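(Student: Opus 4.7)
The plan is to obtain both assertions essentially for free from results already at hand: the inequality~\eqref{eneq} follows by inserting $\vv\equiv 0$ in the relative energy inequality~\eqref{relen}, and the equality~\eqref{weakeq} is the $\xi=0$ specialization of~\eqref{weakeq1} in Proposition~\ref{prop:homo}.

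For~\eqref{eneq} I would first note that $\vv\equiv 0$ lies in every $\mathcal{D}(\mathcal{K})$ since $\mathcal{K}(0)=0$, and that with this choice one has $\mathcal{A}_\nu(0)=-\f f$, $\mathcal{W}_\nu(\f v|0)=\nu\|\nabla\f v\|_{L^2(\Omega)}^2$, and $\mathcal{K}(0)\mathcal{R}(\f v|0)=0$. Plugging into~\eqref{relen} (which by Corollary~\ref{cor:addreg} is legitimate pointwise in $[0,T]$) yields
\begin{align*}
\tfrac12\|\f v(r+)\|_{L^2(\Omega)}^2+\xi(r+)+\int_q^r\bigl(\nu\|\nabla\f v\|_{L^2(\Omega)}^2-\langle\f f,\f v\rangle\bigr)\de\tau\leq\tfrac12\|\f v(q-)\|_{L^2(\Omega)}^2+\xi(q-)
\end{align*}
for all $q<r$ in $[0,T]$. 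Since $\xi$ is of bounded variation by Corollary~\ref{cor:addreg} and vanishes a.e.\ on $(s,t)$, it has at most countably many discontinuities in $(s,t)$; for a.e.\ pair $q<r\in(s,t)$ both one-sided limits $\xi(q-)$ and $\xi(r+)$ therefore vanish, which is exactly~\eqref{eneq}.

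For~\eqref{weakeq} I would apply Proposition~\ref{prop:homo} to an admissible regularity measure which is homogeneous of rank one, for instance $\mathcal{K}(\vv)=2|\vv|_{\C^{0,1}(\ov\Omega)}$ (see Remark~\ref{rem:W}); test functions $\vv\in\C^1_c((s,t);\C^\infty_{c,\sigma}(\Omega))$ obviously belong to $\mathcal{D}(\mathcal{K})$ and are admissible in~\eqref{weakeq1}, which then reads
\begin{align*}
\int_s^t\bigl(\nu(\nabla\f v,\nabla\vv)-(\f v\otimes\f v;\nabla\vv)-(\f v,\t\vv)-\langle\f f,\vv\rangle\bigr)\de\tau\in B\!\left(0,\int_s^t\mathcal{K}(\vv)\,\xi\de\tau\right).
\end{align*}
Because $\mathcal{K}(\vv)\in L^1(0,T)$ and $\xi=0$ a.e.\ on $(s,t)$, the radius on the right vanishes, forcing the left-hand side to equal zero, which is~\eqref{weakeq}.

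The only real point to watch is the interplay between the a.e.\ hypothesis $\xi(\tau)=0$ and the pointwise left/right limits that appear in~\eqref{relen}; this is the step I expect to require the most care, but it is resolved by the BV regularity of $\xi$ guaranteeing only countably many exceptional points, allowing us to choose the endpoints $q,r$ in the continuity set. All the analytical work is shouldered by Proposition~\ref{prop:homo}, whose sign-reversal argument (using homogeneity of $\mathcal{K}$ to test with both $\vv$ and $-\vv$) produces the two-sided bound~\eqref{weakeq1}; once one accepts that, the present proposition is an immediate corollary.
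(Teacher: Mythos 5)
Your proposal is correct, and for the energy part it coincides with the paper: \eqref{eneq} is obtained by inserting $\vv=0$ into \eqref{relen} and using that $\xi$ vanishes a.e.\ on $(s,t)$. For \eqref{weakeq} you take a slightly different (and shorter) route: you cite Proposition~\ref{prop:homo} as a black box for one fixed rank-one homogeneous admissible $\mathcal{K}$ and observe that the radius $\int_s^t\mathcal{K}(\vv)\xi\de\tau$ of the ball in \eqref{weakeq1} vanishes, whereas the paper reruns the proof of Proposition~\ref{prop:homo}, notes that all steps up to \eqref{before} are independent of the rank-one homogeneity, and then, since $\xi\equiv 0$ kills the term $\mathcal{K}(\vv)\xi$, carries out the scaling $\vv=\alpha\tu$, $\alpha\to\infty$, and the $\pm\tu$ sign reversal for an \emph{arbitrary} admissible $\mathcal{K}$. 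The trade-off is that your argument needs an admissible rank-one homogeneous $\mathcal{K}$ also in the viscous case: Remark~\ref{rem:W} states $\mathcal{K}(\vv)=2|\vv|_{\C^{0,1}(\ov\Omega)}$ only for $\nu=0$, but it is indeed admissible for $\nu>0$ as well, since the additional term $\nu\|\nabla\f v-\nabla\vv\|_{L^2(\Omega)}^2$ in $\mathcal{W}_\nu$ is nonnegative and the convective term is controlled exactly as in \eqref{Wnull}; this one-line check should be made explicit. One further small imprecision: Corollary~\ref{cor:addreg} yields $E=\tfrac12\|\f v\|_{L^2(\Omega)}^2+\xi\in\BV$, not $\xi\in\BV$; however, you do not actually need bounded variation of $\xi$, because $\xi\geq 0$ everywhere and $\xi=0$ a.e.\ on $(s,t)$ force any one-sided limit $\xi(q-)$, $\xi(r+)$ appearing in \eqref{relen} at interior points of $(s,t)$ to vanish, which already gives \eqref{eneq} for a.e.\ $q<r$.
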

\begin{remark}
If the auxiliary variable $\xi$ vanishes, then the Navier--Stokes equation is fulfilled in the weak sense. 
 
In case that $\mathcal{K}$ is homogeneous of rank one, the error in the weak formulation can be characterized by the auxiliary variable $\xi$. 
 
\end{remark}
\begin{proposition}\label{prop:equi}
The relative energy inequality~\eqref{relen} of Definition~\ref{def:diss} can be equivalently written as 
\begin{itemize}
\item the locally-in-time version
\begin{multline}
\frac{\de }{\de t} \left [ \mathcal{R}(\f v(t)|\vv(t)) + \xi (t) \right ] + \mathcal{W}(\f v(t)| \vv(t)) + \langle\mathcal{A}(\vv(t)), \f v(t) - \vv (t)\rangle \\
- \mathcal{K}(\vv(t)) \left [ \mathcal{R}(\f v (t)| \vv(t) ) + \xi(t) \right ] \leq 0 \label{locally}
\end{multline}
for a.e.~$t\in (0,T)$ and all $\vv \in\mathbb{Y}\cap \D$. 

\item the reduced locally-in-time version
\begin{multline}
\t \left ( E(t) - ( \f v(t) , \vv) \right) + \nu \left ( \nabla \f v(t) ; \nabla \f v (t)- \nabla \vv\right ) + \left ( \f v(t) \otimes \f v(t) ; \nabla \vv \right ) - \langle \f f(t) , \f v (t)- \vv \rangle \\ + \mathcal{K}(\vv) \left ( \frac{1}{2}\| \f v(t) \|_{L^2(\Omega)}^2 - E (t)\right ) {} \leq 0 \,.\label{reduced}
\end{multline}
for a.e.~$t\in (0,T)$  and all $\vv \in \C^\infty_{0,\sigma}(\Omega) $, where $E = 1/2\| \f v\|_{L^2(\Omega)}^2 + \xi $. 
\end{itemize}
The time derivatives of $E$ has to be understood in the usual $\BV$ sense, the sense of Radon measures and the time-derivative of $\f v$ in the weak sense. 

\end{proposition}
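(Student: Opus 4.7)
The plan is to establish the chain of equivalences $\eqref{relen}\Leftrightarrow\eqref{locally}\Leftrightarrow\eqref{reduced}$.

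\emph{Step 1: $\eqref{relen}\Leftrightarrow\eqref{locally}$.} I would apply Lemma~\ref{lem:invar} with
\begin{align*}
g(t) = \mathcal{R}(\f v(t)|\vv(t)) + \xi(t), \qquad f(t) = \mathcal{W}_\nu(\f v,\vv) + \langle \mathcal{A}_\nu(\vv), \f v-\vv\rangle - \mathcal{K}(\vv)\bigl[\mathcal{R}(\f v|\vv)+\xi\bigr].
\end{align*}
The integrability hypotheses $g\in L^\infty(0,T)$ and $f\in L^1(0,T)$ follow from $\f v\in\mathbb{X}$, $\xi\in L^\infty(0,T)$, and $\vv\in\mathbb{Y}\cap\D$, together with the design of $\mathbb{Y}$ (which guarantees $\mathcal{A}_\nu(\vv)\in L^1(0,T;(\Ha)^*)$) and of $\D$ (which guarantees $\mathcal{K}(\vv)\in L^1(0,T)$). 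The BV regularity from Corollary~\ref{cor:addreg} makes the cadlag one-sided limits in \eqref{relen} well defined, so \eqref{ineq2} reads exactly as \eqref{relen}; conversely, the weak form $-\int\phi'g+\int\phi f\leq 0$ produced by Lemma~\ref{lem:invar} is the distributional (Radon-measure) reading of \eqref{locally}.

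\emph{Step 2: $\eqref{locally}\Rightarrow\eqref{reduced}$.} I would substitute a time-independent test $\vv(t,\f x)=\vv_0(\f x)\in\C^\infty_{0,\sigma}(\Omega)\subset\mathbb{Y}\cap\D$ into \eqref{locally}. Since $\tfrac{1}{2}\|\vv_0\|_{L^2(\Omega)}^2$ is constant in time, $\t[\mathcal{R}(\f v|\vv_0)+\xi]=\t(E-(\f v,\vv_0))$. Integration by parts combines the viscous contributions as $\nu\|\nabla\f v-\nabla\vv_0\|_{L^2(\Omega)}^2+\langle-\nu\Delta\vv_0,\f v-\vv_0\rangle=\nu(\nabla\f v,\nabla\f v-\nabla\vv_0)$. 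The convective pieces $-\int((\f v-\vv_0)\cdot\nabla)(\f v-\vv_0)\cdot\vv_0$ (from $\mathcal{W}_\nu$) and $\int(\vv_0\cdot\nabla)\vv_0\cdot(\f v-\vv_0)$ (from $\mathcal{A}_\nu$) collapse, using $\di\f v=\di\vv_0=0$, the boundary conditions, the identity $\int((\f a\cdot\nabla)\f a)\cdot\f b=-\int\f a\otimes\f a:\nabla\f b$, and the vanishing of $\int\vv_0\otimes\vv_0:\nabla\vv_0$, to $(\f v\otimes\f v;\nabla\vv_0)$. Finally $\mathcal{K}(\vv_0)\mathcal{R}(\f v|\vv_0)-\mathcal{K}(\vv_0)[\mathcal{R}(\f v|\vv_0)+\xi]=\mathcal{K}(\vv_0)(\tfrac{1}{2}\|\f v\|_{L^2(\Omega)}^2-E)$, producing \eqref{reduced}.

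\emph{Step 3: $\eqref{reduced}\Rightarrow\eqref{locally}$.} The crucial observation is that the $\t\vv$-contributions in \eqref{locally} cancel algebraically,
\begin{align*}
-(\f v,\t\vv)+(\vv,\t\vv)+\langle\t\vv,\f v-\vv\rangle = 0,
\end{align*}
so the left-hand side of \eqref{locally} at time $t$ depends on $\vv\in\mathbb{Y}\cap\D$ only through $\vv(t,\cdot)$. Reversing the integrations by parts of Step~2 shows that at each fixed $t$ the left-hand side of \eqref{locally} evaluated at $\vv$ coincides with the left-hand side of \eqref{reduced} evaluated at $\vv_0=\vv(t,\cdot)$. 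Picking a countable dense subfamily of $\C^\infty_{0,\sigma}(\Omega)$ furnishes a common null set $N\subset(0,T)$ outside of which \eqref{reduced} holds for every member of that family; the continuity of $\mathcal{K}$ and of the remaining quadratic functionals in $\vv_0$ with respect to the topology of $\D$ postulated in Definition~\ref{def:Kad} then extends \eqref{reduced} to $\vv_0=\vv(t,\cdot)$ for every $t\notin N$, delivering \eqref{locally}.

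The main obstacle is the density/selection step of the converse direction: producing a single exceptional null set valid simultaneously for all spatial test functions arising as $\vv(t,\cdot)$ when $\vv$ ranges over the large class $\mathbb{Y}\cap\D$. This is precisely what the separability and continuity hypotheses on $\D$ imposed in Definition~\ref{def:Kad} are designed to furnish; the remaining manipulations are routine integration-by-parts computations.
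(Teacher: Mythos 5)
Your argument is correct and, at its core, performs the same computation as the paper, but the packaging differs enough to be worth comparing. The paper obtains \eqref{locally} by taking difference quotients of \eqref{relen} on $[t,t+h]$, using Lebesgue differentiation for the integral term, the $\BV$/Radon-measure structure of $E$ from Corollary~\ref{cor:addreg}, and the product rule~\eqref{product}; you instead invoke Lemma~\ref{lem:invar} to pass between the interval form, the distributional form and the pointwise form, which is an equivalent route (the paper itself records the distributional equivalence as an intermediate remark). For the reduced version, the paper inserts the definitions of $E$, $\mathcal{A}_\nu$ and $\mathcal{W}_\nu$ for a time-dependent test function and then simply asserts that one may \emph{drop the time-dependency}; your Steps 2 and 3 supply the substance behind that assertion: the substitution of a time-independent $\vv_0$ with the collapse of the convective terms to $(\f v\otimes\f v;\nabla\vv_0)$, and, for the converse, the explicit cancellation $-(\f v,\t\vv)+(\vv,\t\vv)+\langle\t\vv,\f v-\vv\rangle=0$ showing that the left-hand side of \eqref{locally} depends on $\vv$ only through $\vv(t,\cdot)$, followed by the countable-dense-family/common-null-set argument that rests on the density and continuity hypotheses of Definition~\ref{def:Kad}. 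Both proofs lean identically on Corollary~\ref{cor:addreg} to make $\t\f v$ paired with $\vv(t)$ and $\de E/\de t$ meaningful, and both share the paper's level of informality about approximating $\vv(t,\cdot)$ by smooth fields in the fine topology on $\mathcal{D}(\mathcal{K})$; within that framework your write-up is, if anything, the more complete account of the claimed equivalence.
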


\begin{theorem}\label{thm:exist}
Let $ \Omega \subset \R^d$ for $d\geq 2$ be a {bounded} Lipschitz domain, $\nu \geq  0$. Let $\mathcal{R}$, $\mathcal{W}_\nu$, $\mathcal{K}$, and $\mathcal{A}_\nu$ be given as above in~\eqref{def:nav} and let $\mathcal{K}$ fulfill Definition~\ref{def:Kad}. 
 
Then there exists at least one energy-variational solution $\f v \in \mathbb{X}$ to every  $\f v_0\in \Ha$ and $\f f \in \mathbb{Z} $  in the sense of Definition~\ref{def:diss}.
For every energy-variational solution $(\f v , \xi)$, we define as above $E= 1/2\| \f v \|_{L^2(\Omega)}^2 + \xi $. Then, the 
 set of solutions $\mathcal{S}(\f v _0 , \f f) \subset \mathbb{X} \times \BV$ consists of the pairs $(\f v ,E)$, with $(\f v , \xi)$ being an energy-variational solution  according to Definition~\ref{def:diss} to a given initial-value $\f v_0\in \Ha$ and right-hand side $\f f \in \mathbb{Z}$. 
The set $\mathcal{S}(\f v _0 , \f f)$  is convex and weakly$^*$-closed and thus compact in the weak$^*$ topology of $\mathbb{X}\times \BV $.

Additionally, the set-valued mapping $\mathcal{S}: \Ha \times \mathbb{Z} \ra \mathbb{X}\times \BV$, which maps $( \f v_0 , \f f)$ to the solution set consisting of  elements $(\f v, E)\in \mathcal{S}(\f v _0, \f f)$ is continuous in the set valued sense, \textit{i.e.,} if $( \f v ^n_0 , \f f^n ) \ra ( \f v _0 , \f  f ) $ in $ \Ha \times \mathbb{Z}$, then the associated solutions sets $\mathcal{S}(\f v_0^n , \f f^n) $ converge to $\mathcal{S}(\f v_0, \f f)$ in the Kuratowski sense with respect to the topology induced by the weak$^*$-convergence in $\mathbb{X} \times \BV$. 

This means, that to every element $(\f v, E)\in \mathcal{S}(\f v _0, \f f)$ and every  sequence $( \f v ^n_0 , \f f^n ) \ra ( \f v _0 , \f  f ) $, we may construct a sequence $\{( \f v^n , E^n)\}$ such that 
\begin{align*}
( \f v^n , E^n)
\ra  (\f v, E) \quad \text{in }\mathbb{X}\times \BV\,.
\end{align*}
(which is referred to as lower semi-continuity), and if there exists a sequence  $( \f v ^n_0 , \f f^n ) \ra ( \f v _0 , \f  f ) $ and a sequence $ ( \f v^n , E^n) \stackrel{*}{\rightharpoonup} (\f v, E) $ with $(\f v ^n ,E^n) \in \mathcal{S}(\f v_0^n, \f f^n)$ then  $(\f v, E)\in \mathcal{S}(\f v _0, \f f)$  (which is referred to es upper semi-continuity). 
\end{theorem}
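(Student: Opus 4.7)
My plan is to establish the four claims of the theorem — existence, convexity, weak-$*$ closedness, and Kuratowski continuity of $\mathcal{S}$ — in order, with lower semi-continuity being the most delicate. For existence with $\nu>0$, I would use a Galerkin approximation with respect to the eigenfunctions of the Stokes operator. The resulting finite-dimensional solutions $\f v_n$ satisfy the energy equality exactly, and testing the Galerkin equation against the projection of $\vv\in\mathbb{Y}\cap\D$ gives the relative energy identity at the Galerkin level (with $\xi_n\equiv 0$). The energy equality yields uniform bounds on $\f v_n$ in $\mathbb{X}$ and on $F_n(t):=\tfrac12\|\f v_n(t)\|_{L^2(\Omega)}^2$ in $\BV$; Helly's theorem extracts a subsequence along which $\f v_n\stackrel{*}{\rightharpoonup}\f v$ in $\mathbb{X}$ and $F_n(t)\to F(t)$ pointwise on $[0,T]$. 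Defining $\xi(t):=F(t)-\tfrac12\|\f v(t)\|_{L^2(\Omega)}^2\geq 0$ (non-negativity from weak lower semi-continuity of the $L^2$-norm), I pass to the $\liminf$ in the Galerkin identity: the convex-in-$\f v$ term $\mathcal{W}_\nu$ is handled via Lemma~\ref{lem:lowcon} with convexity noted in Remark~\ref{rem:W}, while the remaining integrand contributions are affine in $(\f v_n,F_n)$ and pass through directly. For $\nu=0$ the same scheme is applied with $\nu_k\searrow 0$ in a vanishing viscosity limit, admissibility of $\mathcal{K}_0$ dominating the Euler-type term in $\mathcal{W}_0$.

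Convexity and weak-$*$ closedness of $\mathcal{S}(\f v_0,\f f)$ follow at once from the structure of the relative energy inequality when written in terms of $E=\tfrac12\|\f v\|_{L^2(\Omega)}^2+\xi$: the endpoint terms are affine in $(\f v,E)$, and the integrand is affine in $E$ and convex in $\f v$ (convex plus linear). Pointwise Jensen preserves the inequality under convex combinations, and weak-$*$ convergence $(\f v^n,E^n)\stackrel{*}{\rightharpoonup}(\f v,E)$ combined with Helly (giving $E^n(t)\to E(t)$ pointwise a.e.\ along a subsequence) and Lemma~\ref{lem:lowcon} passes the inequality to the limit.

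Upper semi-continuity of $\mathcal{S}$ reduces to the closedness argument, with the additional ingredients that $\f v_0^n\to\f v_0$ strongly in $\Ha$ controls the initial term $\mathcal{R}(\f v_0^n|\vv(0))\to\mathcal{R}(\f v_0|\vv(0))$ and $\f f^n\to\f f$ strongly in $\mathbb{Z}$ handles the perturbation $\langle\f f^n-\f f,\f v^n-\vv\rangle$ inside $\langle\mathcal{A}_\nu^n(\vv),\f v^n-\vv\rangle$. Lower semi-continuity is, to my mind, the main obstacle: given $(\f v,E)\in\mathcal{S}(\f v_0,\f f)$, one must \emph{construct} $(\f v^n,E^n)\in\mathcal{S}(\f v_0^n,\f f^n)$ with $(\f v^n,E^n)\stackrel{*}{\rightharpoonup}(\f v,E)$. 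My proposal is to set $\f v^n:=\f v+\f u^n$, where $\f u^n$ solves the linear Stokes (or Euler) system with data $(\f v_0^n-\f v_0,\f f^n-\f f)$, and $\xi^n:=\xi+\rho_n$ with a non-negative corrector $\rho_n$ absorbing the quadratic defect $\tfrac12\|\f u^n\|_{L^2(\Omega)}^2$ together with the cross-terms in $\mathcal{R}(\f v^n|\vv)-\mathcal{R}(\f v|\vv)$ and $\mathcal{W}_\nu(\f v^n|\vv)-\mathcal{W}_\nu(\f v|\vv)$. Since $\f u^n\to 0$ strongly in the natural energy space as the data converge, both $\f v^n\to\f v$ weakly in $\mathbb{X}$ and $\rho_n\to 0$ in $L^\infty(0,T)$. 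The hard part will be verifying that the corrected $(\f v^n,\xi^n)$ satisfies the \emph{full} relative energy inequality for \emph{every} test function $\vv\in\mathbb{Y}\cap\D$ and \emph{every} admissible $\mathcal{K}$; a convex-combination step with an arbitrary element of $\mathcal{S}(\f v_0^n,\f f^n)$ — whose existence is guaranteed by the first part and whose weight can be sent to zero — may be needed to close this verification.
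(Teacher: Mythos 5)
Your proposal follows essentially the same route as the paper: Galerkin approximation with a discrete relative energy inequality, uniform bounds plus Helly's selection to define $\xi = E - \tfrac12\|\f v\|_{L^2(\Omega)}^2$, weak lower semi-continuity of $\mathcal{W}_\nu$ for the limit passage, vanishing viscosity for $\nu=0$, convexity/weak$^*$-closedness from the affine-plus-convex structure, and a recovery sequence $\f v^n = \f v + \bar{\f v}^n$ with $\bar{\f v}^n$ solving the linear Stokes (or Euler) problem with the difference data. The ``hard part'' you flag in the lower semi-continuity is closed in the paper not by a convex-combination trick but by an explicit choice of the corrector $\bar E^n$ (satisfying a pointwise lower bound absorbing the cross term via $\mathcal{K}(\vv)$ and an integrated monotonicity condition), which tends to zero in $\BV$ as the data converge.
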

\begin{remark}
In the case of $d=2,\,3$ or $4$, the existence of weak solutions to the Navier--Stokes equations is well known (see for instance~\cite{temam}). Due to Proposition~\ref{prop:diss}, this also proves the existence of energy-variational solutions.
The new result of the preceding theorem is expanding the existence of energy-variational solutions to any space dimension. 

\end{remark}

\begin{theorem}[Main result]\label{thm:maxdiss}
 Let $ \Omega \subset \R^d$ for $d\geq 2$ be a bounded Lipschitz domain, $\nu \geq  0$. Let $\mathcal{R}$, $\mathcal{W}_\nu$, $\mathcal{K}$, and $\mathcal{A}_\nu$ be given as above in~\eqref{def:nav} and let $\mathcal{K}$ fulfill Definition~\ref{def:Kad}.

  Then there exists a unique minimal  energy-variational solution $\f v \in \mathbb{X}$ to every  $\f v_0\in \Ha$ and $\f f \in \mathbb{Z} $
  in the sense of Definition~\ref{def:ad}.  
  Additionally, it holds that $\xi (t) =0 $ for a.e.~$t\in(0,T)$, which implies by Proposition~\ref{prop:equation} that the admissible solution is actually a weak solution according to Definition~\ref{def:weak}.

  The minimal energy-variational solution depends continuously on the initial datum and the right-hand side in the {following sense: If}  
$( \f v ^n_0 , \f f^n ) \ra ( \f v _0 , \f  f ) $ in $ \Ha \times \mathbb{Z}$, then  to every $n\in \N$, there exists a minimal energy-variational solution $\f v^n \in \mathbb{X}$ and it holds 
$ \f v^n \ra  \f v $ in $L^2(0,T;\Ha)$.

\end{theorem}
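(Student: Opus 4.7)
The plan is to apply the direct method of the calculus of variations to the convex, weakly$^*$-compact solution set $\mathcal{S}(\f v_0,\f f)$ supplied by Theorem~\ref{thm:exist}, and then to exploit the uniform convexity of the $L^2$-norm both to obtain uniqueness and to force the defect $\xi$ to vanish at the minimizer.

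\emph{Step 1 (Direct method).} I minimize the functional
\begin{align*}
J(\f v,\xi):=\int_0^T\Bigl(\tfrac{1}{2}\|\f v(\tau)\|_{L^2(\Omega)}^2+\xi(\tau)\Bigr)\de\tau
\end{align*}
over $\mathcal{S}(\f v_0,\f f)$. This functional is convex and, by Lemma~\ref{lem:lowcon} applied to the quadratic $L^2$-norm together with the nonnegativity of $\xi$, weakly$^*$-lower semi-continuous; since Theorem~\ref{thm:exist} delivers $\mathcal{S}(\f v_0,\f f)$ as a nonempty, convex, weakly$^*$-compact subset of $\mathbb{X}\times \BV$, a minimizer $(\f v^*,\xi^*)\in\mathcal{S}(\f v_0,\f f)$ exists.

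\emph{Step 2 (Uniqueness and Radon--Riesz strong convergence).} Convexity of $\mathcal{S}$ together with the parallelogram identity yields, for any $(\f v^i,\xi^i)\in\mathcal{S}$,
\begin{align*}
J\Bigl(\tfrac{\f v^1+\f v^2}{2},\tfrac{\xi^1+\xi^2}{2}\Bigr)=\tfrac{1}{2}\bigl(J(\f v^1,\xi^1)+J(\f v^2,\xi^2)\bigr)-\tfrac{1}{8}\int_0^T\|\f v^1-\f v^2\|_{L^2(\Omega)}^2\de\tau\,.
\end{align*}
If both are minimizers then $\f v^1=\f v^2$ in $L^2(0,T;L^2(\Omega))$ and hence $\xi^1=\xi^2$, so the minimizer is unique; applied to a minimizing sequence the same identity forces strong convergence $\f v_n\to\f v^*$ in $L^2(0,T;L^2(\Omega))$ (Radon--Riesz in the uniformly convex Hilbert space). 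This is the key conceptual replacement of Aubin--Lions compactness.

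\emph{Step 3 (Vanishing defect and weak-solution property).} As minimizing sequence I take the Galerkin approximants for $\nu>0$ (vanishing-viscosity approximants for $\nu=0$) from the existence proof of Theorem~\ref{thm:exist}; being classical solutions to the projected system they satisfy the relative energy inequality~\eqref{relen} as an equality with $\xi_n\equiv 0$ up to a discretization error that vanishes in the limit, hence $(\f v_n,0)$ are (approximate) elements of $\mathcal{S}(\f v_0,\f f)$ with $J(\f v_n,0)\to \inf J$. By Step~2 the $\f v_n\to \f v^*$ strongly in $L^2(0,T;L^2(\Omega))$, which combined with the uniform $\mathbb{X}$-bounds allows passage to the limit $\f v_n\otimes \f v_n\to \f v^*\otimes \f v^*$ in $L^1(0,T;L^1(\Omega))$. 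Consequently $\f v^*$ verifies the weak formulation~\eqref{weakv} and the strong energy inequality~\eqref{eninv}, so Proposition~\ref{prop:diss} reinserts $(\f v^*,0)$ into $\mathcal{S}(\f v_0,\f f)$. The minimality of $(\f v^*,\xi^*)$ together with $\xi^*\geq 0$ then forces $\int_0^T\xi^*\de\tau\leq 0$, whence $\xi^*\equiv 0$ almost everywhere, and everywhere by the cadlag convention; Proposition~\ref{prop:equation} confirms $\f v^*$ as the announced weak solution.

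\emph{Steps 4--5 (Pointwise minimality and continuous dependence).} Given an arbitrary competitor $(\bar{\f v},\bar\xi)\in\mathcal{S}$, the parallelogram identity from Step~2 evaluated pointwise at $t$, combined with the semi-flow/concatenation property of Remark~\ref{rem:semi} applied to the restrictions of $(\f v^*,0)$ and $(\bar{\f v},\bar\xi)$ to any sub-interval $(s,t)\subset(0,T)$, yields $E^*(t+)\leq \bar E(t+)$ for almost every $t$, and for every $t\in[0,T]$ via the cadlag representative. For continuous dependence, if $(\f v_0^n,\f f^n)\to(\f v_0,\f f)$ in $\Ha\times\mathbb{Z}$, the Kuratowski continuity of $\mathcal{S}$ from Theorem~\ref{thm:exist} yields a weak$^*$-accumulation point of the associated minimal solutions $(\f v^n,0)$ inside $\mathcal{S}(\f v_0,\f f)$, which by the uniqueness of Step~2 must equal $(\f v^*,0)$; the Radon--Riesz argument of Step~2 then upgrades weak$^*$-convergence to strong convergence in $L^2(0,T;\Ha)$. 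The main obstacle is Step~3: producing a minimizing sequence that actually consists of weak (Galerkin-level) solutions with $\xi\equiv 0$ and justifying that the strong $L^2$-convergence coming from uniform convexity alone suffices to pass to the limit in the quadratic convection term inside the true weak formulation~\eqref{weakv}---this is precisely where the conceptual novelty of the framework bears fruit, bypassing Aubin--Lions and extending validity to all space dimensions.
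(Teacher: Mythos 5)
Your plan replaces the paper's selection mechanism by a global-in-time minimization of $J(\f v,\xi)=\int_0^T(\tfrac12\|\f v\|_{L^2(\Omega)}^2+\xi)\de\tau$, but Definition~\ref{def:ad} demands \emph{pointwise} minimality $E(t+)\leq\bar E(t+)$ for every $t$ and every competitor, and the passage from your global minimizer to this pointwise property is never established. Your appeal to ``the parallelogram identity combined with the concatenation property'' does not work as stated: concatenation (Remark~\ref{rem:semi}) only glues $(\f v^1,\xi^1)$ to $(\f v^2,\xi^2)$ at $t_0$ when $\f v^2(t_0)=\f v^1(t_0)$, so you cannot switch from $\f v^*$ to a competitor that beats it at $t_0$; and since $\f f\neq 0$ allows the energy to increase, a solution restarted at $t_0$ from the competitor's state is not comparable to $E^*$ on $(t_0,T]$, so beating $E^*$ at one instant does not produce a smaller value of $J$. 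In fact the paper's whole construction exists precisely because pointwise minimality is delicate: it restarts the flow at every $t_0$ from $\f v(t_0)$ with zero defect, uses Step~1 (strict convexity of the $L^2$-norm on the convex solution set) for pointwise uniqueness, and obtains existence as the nonempty intersection of \emph{nested weak$^*$-compact} solution sets $\bigcap_{t}\mathcal{S}(\f v(t),\f f(\cdot+t))$ --- a semiflow-selection argument that your proposal does not contain. The restart argument is also exactly how the paper shows $\xi\equiv 0$: if $\xi(t_0+)>0$, the restarted concatenation has strictly smaller energy at $t_0+$, contradicting the \emph{pointwise} criterion; with your integral criterion this contradiction is not available.

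Your Step~3 has a second, independent gap bordering on circularity. The Galerkin (or vanishing-viscosity) approximants are not elements of $\mathcal{S}(\f v_0,\f f)$, and nothing guarantees that $J(\f v_n,0)\to\inf_{\mathcal{S}}J$; hence the Radon--Riesz/parallelogram argument of your Step~2, which applies to minimizing sequences inside the convex set, cannot be invoked to get $\f v_n\to\f v^*$ strongly in $L^2$. But that strong convergence is exactly what the defect $\xi$ measures the failure of, so you are assuming what you need to prove in order to pass to the limit in $\f v_n\otimes\f v_n$ and conclude $\xi^*\equiv 0$. In the paper, the weak-solution property is instead a \emph{consequence} of $\xi\equiv 0$ (obtained from pointwise minimality as above) via Proposition~\ref{prop:equation}, not an input. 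Finally, in the continuous-dependence step, identifying the weak$^*$ limit of the minimal solutions $(\f v^n,0)$ with $\f v$ cannot follow from uniqueness alone, since the limit is a priori only some energy-variational solution; the paper needs the lower-semicontinuity (recovery-sequence) half of the Kuratowski convergence to derive a contradiction with the minimality of $\f v^n$ if $\bar E(t_0)>\tfrac12\|\f v(t_0)\|_{L^2(\Omega)}^2$, and only then upgrades to strong $L^2(0,T;\Ha)$ convergence by uniform convexity --- the one ingredient your sketch shares with the actual proof.
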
 
\begin{remark}
The  minimal energy-variational solution fulfills the semiflow property by Remark~\ref{rem:semi} and the uniqueness.

\end{remark}

\section{Proofs of the main theorems\label{sec:proof}}
\subsection{Energy-variational and weak solutions}
First, we show that the velocity $\f v$ of a weak solution  is  an energy-variational solution. 

 \begin{proof}[Proof of Proposition~\ref{prop:diss}]
 
 Let $\f v $ be a weak solution to the Navier--Stokes and Euler equations~\eqref{nav} with strong energy inequality for~$\nu\geq0$.

For a test function $\vv\in  \mathbb{Y} $, we find by testing the system operator $\mathcal{A}_\nu(\vv)$ by $\phi \vv$ with $\phi \in \C^1_c([0,T))$ and standard calculations that
\begin{multline}
\int_0^T \phi \left \langle\mathcal{A}_\nu(\vv),\vv  \right\rangle   \de t =\\ -\int_0^T \phi '   \frac{1}{2} \| \vv (t) \|^2_{L^2(\Omega)} \de t   + \int_0^T \phi\left ( \nu  \| \nabla \f v \|^2_{L^2(\Omega)} - \langle \f f , \vv \rangle\right )  \de t - \phi(0) \frac{1}{2}\| \vv (0) \|_{L^2(\Omega)}^2 \,. \label{eninvv}
\end{multline}
Testing again the system operator $\mathcal{A}_\nu(\vv)$ by $\phi\f v$ and choosing $\f \varphi $ to be $\phi \vv$ in~\eqref{weakv}   with $\phi \in \C^1_c([0,T))$ (or approximate it appropriately), we find
\begin{multline}
- \int_0^T \phi' \int_\Omega \f v \cdot \vv \de \f x \de t + \int_0^T \phi\left ( \int_\Omega \left ( 2 \nu \nabla \f v : \nabla \vv - ( \f v \otimes  \f v) : \nabla  \vv + ( \vv \cdot \nabla) \vv \cdot \f v \right ) \de \f x  \right ) \de t \\= \int_0^T \phi \left\langle  \mathcal{A}_\nu(\vv), \f v\right \rangle  \de t + \phi(0) \int_\Omega \f v_0 \cdot \vv(0) \de \f x+ \int_0^T \phi\langle\f  f , \vv + \f v \rangle \de t  \,.\label{eninboth}
\end{multline}
Reformulating~\eqref{eninv} by Lemma~\ref{lem:invar}, adding~\eqref{eninvv}, as well as subtracting~\eqref{eninboth}, let us deduce that 
\begin{multline}
-\int_0^T \phi'\frac{1}{2} \| \f v - \vv \|_{L^2(\Omega)}^2
 \de t  +\nu \int_0^T \phi \| \nabla \f v - \nabla \vv \|^2_{L^2(\Omega)}\de t - \phi(0) {\frac{1}{2}\| \f v_0 -\vv(0) \|_{L^2(\Omega)}^2} \\  -  \int_0^T\phi \left (  \int_\Omega  \left (  ( \vv \cdot \nabla) \vv \cdot \f v- ( \f v \otimes \f v) : \nabla \vv \right )  \de \f x 
  \right )  \de t + \int_0^T\phi \left  \langle \mathcal{A}_\nu(\vv),\f v- \vv\right \rangle  \de t\leq 0 \,\label{secrelen}
\end{multline}
for all $\phi\in{C}^1_c([0,T])$ with $\phi\geq 0$ a.e.~on $(0,T)$.
%
We adopt some standard manipulations using the skew-symmetry of the convective term in the last two arguments and the fact that $\f v$ and $\vv$ are divergence free, to find
\begin{align}
-\int_\Omega  \left (  ( \vv \cdot \nabla) \vv \cdot \f v- ( \f v \otimes \f v) : \nabla \vv \right )  \de \f x ={}& -
\int_\Omega \left (  (\f v \cdot \nabla) ( \f v - \vv) \cdot \vv  + ( \vv \cdot \nabla) \vv \cdot ( \f v- \vv) 
\right ) 
\de \f x \notag
\\ ={}&  -\int_\Omega  (( \f v-\vv) \cdot \nabla ) (\f v-\vv)  \cdot \vv \de \f x\,\notag
\intertext{for $\nu>0$ and}
-\int_\Omega  \left (  ( \vv \cdot \nabla) \vv \cdot \f v- ( \f v \otimes \f v) : \nabla \vv \right )  \de \f x ={}&  \int_\Omega (\f v - \vv )^T \cdot \left ( \nabla \vv\right )_{\sym} ( \f v- \vv)\de \f x \\
{}& \int_\Omega ( ( \f v - \vv )\otimes \vv ) : \nabla \vv \de \f x  +\int_\Omega (\vv\cdot\nabla) \vv\cdot \left (  \f v - \vv \right )\de \f x\notag  \\
 ={}&  \int_\Omega (\f v - \vv )^T \cdot \left ( \nabla \vv\right )_{\sym} ( \f v- \vv) \de \f x \,\label{convcalc}
\end{align}
for $\nu =0$. 
Inserting this into~\eqref{secrelen}, adding as well as subtracting $\mathcal{K}_\nu(\vv)\mathcal{R}(\f v| \vv) $
, we conclude 
\begin{multline*}
-\int_0^T \phi '\frac{1}{2 }\| \f v(t) - \vv(t) \|_{L^2(\Omega)}^2
\de t 
\\ +\int_0^T \phi  \left [
\mathcal{W}_\nu (\f v | \vv)
+\left \langle \mathcal{A}_\nu(\vv),\f v - \vv  \right \rangle 
- \mathcal{K}(\vv) \frac{1}{2 }\| \f v - \vv \|_{L^2(\Omega)}^2
\right ]
 \de s \leq 0 \,
\end{multline*}
for every function $\vv\in \mathbb{Y}\cap \D $ and   all $\phi\in{C}^1_c((0,T))$ with $\phi\geq 0$ a.e.~on $(0,T)$.
Lemma~\ref{lem:invar} and choosing $\xi=0 $
implies~\eqref{relen}. 
\end{proof}

\begin{proof}[Proof of Proposition~\ref{prop:homo}]

Now, we assume that $(\f v, \xi )\in  \mathbb{X}_{\nu}\times L^\infty (0,T)$ is an energy-variational solution according to Definition~\ref{def:diss}. 
Firstly, we observe that the relative energy inequality~\eqref{relen} with $\vv=0 $ gives the energy inequality~\eqref{eneq1}.

Secondly, we infer from the relative energy inequality~\eqref{relen} and Lemma~\ref{lem:invar} that
\begin{align*}
-\int_s^t \phi' [\mathcal{R}(\f v | \vv)+\xi]  \de \tau + \int_s^t \phi \left [ \mathcal{W}_\nu(\f v| \vv) + \left \langle \mathcal{A}_\nu(\vv) , \f v - \vv \right \rangle - \mathcal{K}_\nu(\tu) (\mathcal{R}(\f v | \vv) +\xi) \right ] \de \tau \leq 0 
\,
\end{align*}
for all $\phi\in{C}^1_c(s,t)$ with $\phi\geq 0$ a.e.~on $(s,t)$. 
The Definition of $\mathcal{W}_\nu$ implies
\begin{subequations}\label{redineq}
\begin{multline}
-\int_s^t \phi '[\mathcal{R}(\f v| \vv) +\xi] \de \tau
  \\+ \int_s^t\phi \left [ \nu \| \nabla \f v - \nabla \vv \|_{L^2(\Omega)}^2 - \left ( ( (\f v -\vv) \cdot \nabla )( \f v - \vv ) , \vv \right )  + \left \langle \mathcal{A}_\nu(\vv) , \f v- \vv \right \rangle - \mathcal{K}(\vv) \xi \right ] \de \tau 
\leq  0 \,
\end{multline}
for $\nu>0$ and 
\begin{multline}
- \int_s^t \phi' [\mathcal{R}(\f v| \vv) +\xi] \de t 
\\+ \int_s^t \phi 
\left [ \left ( ( \f v- \vv) \otimes ( \f v -\vv) , (\nabla \vv)_{\sym}\right )   + \left \langle \mathcal{A}_\nu(\vv) , \f v- \vv \right \rangle- \mathcal{K}(\vv) \xi  \right ] \de \tau  \leq  0  \,
\end{multline}
\end{subequations}
for $\nu = 0$.  
For the system operator $\mathcal{A}_\nu$, we find 
\begin{multline*}
 \int_s^t \phi  \left \langle \mathcal{A}_\nu(\vv), \f v - \vv \right \rangle \de \tau = \int_s^t \phi' \frac{1}{2} \| \vv\|_{L^2(\Omega)}^2 \de \tau   
  \\+ \int_s^t \phi \left [ \left ( \t \vv , \f v \right ) + \nu \left ( \nabla \vv , \nabla \f v - \nabla \vv \right ) - \left ( (\vv \cdot \nabla) ( \f v -\vv ) , \vv \right ) - \langle \f f , \f v - \vv \rangle\right ] \de \tau \,
\end{multline*}
for all $\phi \in \C_c^1((s,s))$ with $\phi \geq 0$ a.e.~on $(s,t)$.
Inserting this into~\eqref{redineq}, we may deduce
\begin{multline}
- \int_s^t \phi ' \left [\frac{1}{2} \| \f v \|_{L^2(\Omega)}^2 - \left ( \f v , \vv \right ) +\xi \right ] \de \tau 
 \\+ \int_s^t\phi \left [ \nu \left ( \nabla \f v, \nabla \f v - \nabla \vv \right ) + \left ( \f v \otimes   (\f v-\vv), \nabla \vv \right ) + (\t \vv , \f v ) - \langle \f f, \f v - \vv \rangle - \mathcal{K }(\vv) \xi \right ] \de \tau \leq  0\,.\label{before}
\end{multline}
 Again the skew-symmetry of the trilinear form in the last two entries is used.  Choosing $  \vv = \alpha \tu$ and multiplying the inequality by $1/\alpha$ for $\alpha >0$, we find
\begin{multline}
\frac{1}{\alpha}\left (  
- \int_s^t \phi' \left [\frac{1}{2}\| \f v \|_{L^2(\Omega)}^2+ \xi \right ] \de \tau  
+ \int_s^t \phi \left [\nu  \| \nabla \f v \|_{L^2(\Omega)}^2 - \langle \f f , \f v \rangle \right ] \de \tau \right ) 
\\
+ \int_s^t \left ( \f v , \t(\phi \tu)\right ) \de \tau  - \int_s^t  \phi \left [\nu \left ( \nabla \f v , \nabla \tu\right )  -  \left ( ( \f v \otimes \f v ), \nabla \tu \right )  - \langle \f f , \tu \rangle \right ]    \de \tau  
\leq  \int_s^t \phi \mathcal{K}(\tu) \xi \de \tau \,.\label{weaqineq}
\end{multline}
 Note that the term $((\f v \cdot \nabla) \vv , \vv )$ vanishes since $\f v $ is solenoidal. Additionally, it is used that $\mathcal{K}$ is homogeneous of rank one. 
For $\alpha \ra \infty$ the first line in~\eqref{weaqineq} vanishes and in the resulting inequality we may observe that $\tu$ occurs linearly such that by inserting $\tu$ as well as $-\tu$, we receive two  inequalities,
\begin{multline*}
-  \int_s^t \phi  \mathcal{K}(\tu) \xi \de \tau
\\
\leq  \int_s^t \left ( \f v ,  \t (\tu\phi)\right ) \de t  - \int_s^t  \phi \left [\nu \left ( \nabla \f v , \nabla \tu\right )  -  \left ( ( \f v \otimes \f v ), \nabla \tu \right ) - \langle \f f , \tu \rangle \right ] \de \tau
\\
\leq  \int_s^t \phi \mathcal{K}(\tu) \xi \de \tau \,. 
\end{multline*}
By defining $\vv = -\phi  \tu$, we may observe the  formulation~\eqref{weakeq1}.


 \end{proof}
 \begin{proof}[Proof of Proposition~\ref{prop:equation}]
 This proof is very similar to the previous proof. All arguments in the previous proof up to the inequality~\eqref{before} are independent of the rank-1-homogeneity of $\mathcal{K}$. Thus choosing $\vv=\alpha \tu $ in~\eqref{before} and multiplying by $1/\alpha$ implies~\eqref{weaqineq} with $\xi \equiv 0$. Thus for $\alpha\ra \infty$, we infer due to the linearity of the test function $\tu$ the relation~\eqref{weakeq1} with $\xi \equiv 0$, which is nothing else than~\eqref{weakeq}. 
 \end{proof}

 \begin{proof}[Proof of Proposition~\ref{prop:equi}]
We may consider the relative energy inequality for $t=t+h$ and $s = t $ and multiply the inequality by $1/h$. Taking the limit $h\searrow 0$ in the resulting inequality, we infer that
\begin{multline*}
\frac{1}{h}
 \int_t^{t+h} \left [ \mathcal{W}_\nu(\f v , \vv )   + \left \langle \mathcal{A}_\nu(\vv ) , \f v - \vv  \right \rangle - \mathcal{K}(\vv) (\mathcal{R}( \f v| \vv ) +\xi) \right ]  \de \tau  \ra  \\\mathcal{W}_\nu(\f v(t) , \vv (t))   + \left \langle \mathcal{A}_\nu(\vv(t) ) , \f v(t) - \vv(t)  \right \rangle - \mathcal{K}(\vv(t)) [\mathcal{R}( \f v(t)| \vv(t) ) +\xi(t)]
\end{multline*}
for a.e.~$t\in(0,T)$ since all appearing terms are Lebesgue integrable. 

Since $E= \frac{1}{2}\| \f v \|_{L^2(\Omega)}+ \xi$ is a $\BV$ function, its derivative is a Radon measure (see~\cite[Chap.~8, Sec.~8]{natanson}, or~\cite[Thm.~2.13]{BV}). 
For the mixed term in the relative energy, we infer by the product rule for weak derivatives that 
\begin{align}
\lim_{h \searrow 0 } \frac{1}{h} \left [\left ( \f v (t+h), \vv (t+h)\right ) - \left ( \f v (t),\vv(t) \right )\right ] = \frac{\de }{\de t } \left (  \f v(t), \vv(t) \right ) = \left ( \t \f v(t) , \vv(t) \right ) + \left ( \f v(t) , \t \vv (t) \right ) \label{product}\,
\end{align}
for a.e.~$t\in(0,T)$. 
The derivative is well defined due to the additional regularity deduced in Corollary~\ref{cor:addreg} for the time derivative of $\f v$ and the choice of test functions $\vv$. Thus, for a.e.~$t\in(0,T)$, we may identify 
\begin{align*}
 \lim _{h \searrow 0 } \frac{1}{h}\left [ \mathcal{R}(\f v (t+h) | \vv(t+h) + \xi (t+h) - \mathcal{R}(\f v(t)| \vv(t) )- \xi (t)  \right ] = \frac{\de }{\de t } \left[ \mathcal{R}(\f v(t)| \vv(t) )+  \xi (t)  \right ]
\end{align*}
which is well-defined a.e.~in $(0,T)$ in the sense of Radon measures (see~\cite{natanson,BV}). Note that the pointwise inequality is equivalent to the inequality in the distributional sense, \textit{i.e.,} 
\begin{align*}
- \int_0^T \phi'\left [ \mathcal{R}(\f v(t)| \vv(t) )+ \xi (t)  \right ] \de t + \int_0^T \phi\left [ \mathcal{W}_\nu(\f v(t) , \vv (t))   + \left \langle \mathcal{A}_\nu(\vv(t) ) , \f v(t) - \vv(t)  \right \rangle \right] \de t \\ -\int_0^T \phi  \mathcal{K}(\vv(t)) [\mathcal{R}( \f v(t)| \vv(t) ) +\xi(t)] \de t \leq 0 \,
\end{align*}
for all $\phi \in \C_c^\infty(0,T)$ with $\phi \geq 0$ on $(0,T)$. 

Inserting the definition of $E$, $\mathcal{A}_\nu$, and $\mathcal{W}_\nu $ and using the product rule for the weak derivative~\eqref{product}, we may infer the reduced formulation~\eqref{reduced} for $\vv = \vv(t)$. Then it is possible to drop the time-dependency in the test function.


 \end{proof}

 
 \subsection{Existence of energy-variational solutions}
 In order to prove the existence of energy-variational solutions, we pass to the limit in the relative energy inequality. Therefore, we do not need any strong compactness arguments, which are essential in existence proofs for weak solutions to nonlinear PDEs. 
The formulation of the relative energy inequality allows to pass to the limit only relying on weakly-lower semi-continuity of the associated functionals and Helly's selection principle.

 \begin{proof}[Proof of Theorem~\ref{thm:exist}] 
The proof is based on the usual Galerkin approximation together with standard weak convergence techniques. We divide  the proof in different steps.

\textit{Step 1, Galerkin approximation:}
Since the space $\V$ is separable and the space of smooth solenoi\-dal functions with compact support, $\mathcal{C}_{c,\sigma}^\infty(\Omega;\R^d)$, is dense in $\V$, 
there exists a Galerkin scheme of $\V$, \textit{i.e.}, $\{ W_n\}_{n\in\N}$ with $\clos_{\V} ( \lim_{n\ra \infty} W_n ) = \V$.
 Let $P_n : \Ha \longrightarrow W_n$ denote the
$\Ha$-orthogonal projection onto $W_n$.
The approximate problem is then given as follows: Find {an absolutely continuous solution $\f v^n$ with  $\f v^n(t)\in W_n$ for all $t\in[0,T]$} solving the system
\begin{align}
\left ( \t \f v ^n + ( \f v ^n  \cdot \nabla) \f v ^n , \f w \right ) + \nu \left ( \nabla \f v^n ;\nabla \f w \right ) = \left \langle \f f , \f w\right \rangle \,, \quad \f v ^n(0) = P_n \f v_0 \quad {\text{for all }\f w \in W_n} \,.\label{vdis}
\end{align}

A classical existence theorem (see Hale~\cite[Chapter I, Theorem 5.2]{hale}) provides, for every $n\in\N$, the existence of a maximal extended solution to the above approximate problem~\eqref{vdis} on an
interval~$[0,T_n)$ in the sense of Carath\'e{}odory.

\textit{Step 2, \textit{A priori} estimates:}
%
It can be deduce that $T_n=T$ for all $n\in\N$, 
 if the solution undergoes no blow-up. With the standard \textit{a priori} estimates, we can exclude blow-ups and thus deduce global-in-time existence. \label{sec:exloc}
Testing~\eqref{vdis} by $\f v^n$, we derive the standard energy estimate
\begin{align}
\frac{1}{2} \frac{\de }{\de t} \| \f v^n \|_{L^2(\Omega)}^2 + \nu  \| \nabla \f v ^n\|_{L^2(\Omega)}^2  = 
 \langle \f f , \f v^n \rangle \,.\label{energydis}
\end{align}
For $\f f \in \mathbb{Z} = L^2(0,T;H^{-1}(\Omega)) \oplus L^1(0,T;  L^2(\Omega)) $ for $\nu>0$, the right-hand side can be estimated appropriately. Indeed, there exist two functions $\f f_1\in L^2(0,T; H^{-1}(\Omega)) $ and $\f  f_2 \in L^1(0,T;  L^2(\Omega))$ such that
we may estimate with H\"older's, Young's, and {Poincar\'e}'s inequality that
\begin{align*}
\langle \f f , \f v^n \rangle \leq \frac{\nu }{2}\|\nabla \f v^n \|_{L^2(\Omega)}^2 + \frac{C}{2\nu }\| \f f _1 \|_{ H^{-1}(\Omega)} ^2 + \| \f f_2 \|_{L^2(\Omega)}  \left  (\| \f v^n \|_{L^2(\Omega)}^2+1\right ) \,.
\end{align*}
Inserting this into~\eqref{energydis} allows to apply a version of Gronwall's Lemma in order to infer that $ \{ \f v^n \}$ is bounded and thus weakly$^*$ compact in $\mathbb{X}$ such that there exists a $ \f v \in \mathbb{X}$ with
\begin{align}
\f v^n \stackrel{*}{\rightharpoonup} \f v\quad  \text{in } \mathbb{X}\,.\label{weakkonv}
\end{align}
From~\eqref{energydis}, we observe 
\begin{align*}
\int_0^T \left | \frac{\de }{\de t }\|\f v^n \|^2_{L^2(\Omega)} \right |\de t  \leq 2\int_0^T \nu \| \nabla \f v^n \|_{L^2(\Omega)}^2 + | \langle \f f , \f v ^n \rangle | \de t 
\end{align*}
and by the boundedness of the sequence $\{ \f v^n\} $ in $\mathbb{X}$ as well as~\eqref{rhs}  that the sequence $\{ \|\f v^n \|^2_{L^2(\Omega)} \} _{n\in\N} $ is bounded in $\BV$. By Helly's selection principle, we may infer that there exists a function $E\in\BV$ such that
\begin{align}
\frac{1}{2} \|\f v^n(t) \|^2_{L^2(\Omega)} \ra E (t) \quad\text{for all~} t\in(0,T)\,.\label{BVconv}
\end{align}
\textit{Step 3, Discrete relative energy inequality: }
In order to show the convergence to energy-variational solutions, we derive a discrete version of the relative energy inequality. Assume $\vv \in \mathbb Y \cap \D$. 
Adding~\eqref{energydis} and~\eqref{vdis} tested with $- P_n \vv$,
we find
\begin{align}
\frac{1}{2}  \frac{\de }{\de t}\| \f v^n \|_{L^2(\Omega)}^2  + \nu\left  ( \nabla \f v^n ; \nabla \f v ^n - \nabla P_n\vv \right )   =  \langle \f f , \f  v^n-P_n \vv  \rangle + \left ( \t \f v^n , P_n \vv \right ) + \left ( ( \f v^n\cdot \nabla ) \f v^n , P_n \vv \right )  \,. \label{weakdis}
\end{align}
For the system operator~$\mathcal{A}_\nu$, we observe that
\begin{multline*}
\langle \mathcal{A}_\nu ( P_n \vv), \f v^n - P_n \vv\rangle =\\ \left ( \t P_n \vv , \f v^n \right ) - \t \frac{1}{2} \| P_n \vv \|_{L^2(\Omega)}^2 +\nu \left ( \nabla P_n \vv , \nabla \f v^n - \nabla P_n \vv \right ) + \left ( (P_n\vv \cdot \nabla ) P_n \vv , \f v^n \right ) - \langle \f f , \f v^n - P_n \vv \rangle\,.
\end{multline*}

Adding to as well as subtracting from~\eqref{weakdis} the term $\langle\mathcal{A}_\nu(P_n\vv) , \f v^n -  P_n \vv \rangle  $ leads to 
\begin{multline}
\frac{1}{2}  \frac{\de }{\de t} \| {\f v^n} - P_n\vv \|_{L^2(\Omega)}^2+ \left ( \nu \| \nabla \f v ^n - \nabla P_n \vv \|_{L^2(\Omega)}^2 + \langle  \mathcal{A}_\nu(P_n\vv) , \f v^n -P_n \vv \rangle\right )  \\
= \Big(  
\left ( ( \f v^n \cdot \nabla ) \f v^n , P_n \vv \right ) + \left ( ( P_n \vv \cdot \nabla) P_n \vv , \f v^n \right ) \Big )   \,. \label{disineqrel}
\end{multline}
By some algebraic transformations, we find
\begin{multline}
\left ( ( \f v^n \cdot \nabla ) \f v^n , P_n \vv \right ) + \left ( ( P_n \vv \cdot \nabla) P_n \vv , \f v^n \right ) \\
={}\left ( ( (\f v^n- P_n \vv) \cdot \nabla ) (\f v^n- P_n \vv)  , P_n \vv \right ) 
\\+\left ( ( P_n \vv \cdot \nabla ) (\f v^n- P_n \vv)  , P_n \vv \right ) + \left ( ( P_n \vv \cdot \nabla) P_n \vv , \f v^n - P_n \vv \right ) 
\,.
\label{discalc}
\end{multline}
For the first term on the right-hand side of~\eqref{discalc}, we observe
\begin{align*}
\nu \| \nabla \f v ^n - \nabla P_n \vv \|_{L^2(\Omega)}^2  - \left ( ( (\f v^n- P_n \vv) \cdot \nabla ) (\f v^n- P_n \vv)  , P_n \vv \right )  = \mathcal{W}_\nu(\f v^n | P_n \vv) - \mathcal{K}(P_n \vv)\mathcal{R}(\f v^n | P_n \vv) \,.
\end{align*}

For the second term on the right-hand side of~\eqref{discalc}, we find with an integration-by-parts (or the usual skew-symmetry in the second two variables of the trilinear convection term) that
\begin{align*}
\left ( ( P_n \vv \cdot \nabla ) (\f v^n- P_n \vv)  , P_n \vv \right ) + \left ( ( P_n \vv \cdot \nabla) P_n \vv , \f v^n - P_n \vv \right )  =0\,.
\end{align*}

In order to find the discrete version of the relative energy inequality, the term $\mathcal{K}_\nu(P_n\vv)\mathcal{R}(\f v | P_n\vv) $ is added and subtracted to~\eqref{disineqrel} and the resulting equality is integrated over $(s,t)$ such that
\begin{multline}
\mathcal{R}(\f v^n(t) | P_n \vv (t) ) + \int_s^t \left [ \mathcal{W}_\nu(\f v^n | P_n \vv ) + \langle  \mathcal{A}_\nu(P_n \vv ) , \f v^n -P_n \vv \rangle - \mathcal{K}(P_n\vv) \mathcal{R}(\f v^n| P_n\vv) \right ] \de s 
\\ \leq \mathcal{R}(\f v^n(s) | P_n \vv (s) ) \label{stability}
\end{multline}
 for a.e.~$s,\,t\in(0,T)$ and $\nu>0$.  

\textit{Step 4, Passage to the limit:}
 Via Lemma~\ref{lem:invar}, the inequality~\eqref{stability} may be written as
\begin{multline*}
-\int_0^T \phi' \mathcal{R}(\f v^n | P_n \vv )\de s  \\ + \int_0^T \phi \left [ \mathcal{W}_\nu(\f v^n | P_n \vv ) + \langle  \mathcal{A}_\nu(P_n  \vv ) , \f v^n -P_n \vv \rangle - \mathcal{K}(P_n\vv) \mathcal{R}(\f v^n| P_n\vv)\right ] \de s 
 \leq 0
\end{multline*}
for all $\phi\in{C}^1_c(0,T)$ with $\phi\geq 0$ a.e.~on $(0,T)$. 
Since $\C^1 ([0,T];\mathcal{C}_{c,\sigma}^\infty(\Omega;\R^d))$ is also dense in $\mathbb{Y}$, we may observe 
the strong convergence of the projection $P_n$, \textit{i.e}.,
\begin{align}
\| P_n \vv - \vv \|_{L^2(0,T;\V)} + \| P_n \vv - \vv \|_{L^2(0,T;L^{d/2}(\Omega))}\ra 0 \quad \text{as }n \ra \infty \quad \text{for all }\vv \in \mathbb Y \cap \D \,.\label{strongconv}
\end{align} 
This together with~\eqref{weakkonv} allows to pass to the limit in the second term via the weakly-lower semi-continuity of the convex functional  $\mathcal{W_\nu}$ (see~Lemma~\ref{lem:lowcon} and Remark~\ref{rem:W}).
Since $\f v^n$ only occurs linearly in the  term including $\mathcal{A}_\nu$ on the left-hand side, we may also pass to the limit in this term. 
Indeed, the time derivative may be interchanged with the projection $P_n$ such that
\begin{align*}
\left ( \t P_n \vv , \f v^n - P_n \vv \right ) = \left ( P_n \t \vv , \f v^n - P_n \vv \right )=\left (  \t \vv , \f v^n - P_n \vv \right )\,,
\end{align*}
where it was used that $P_n$ is an orthogonal projection. This together with~\eqref{strongconv} imply that the consistency error vanishes, \textit{i.e.},
\begin{align*}
\int_0^T \phi&\left  \langle \mathcal{A}_\nu( \vv) - \mathcal{A}_\nu (P_n\vv) , \f v^n - P_n\vv\right  \rangle e^{-\int_0^s \mathcal{K}_\nu (P_n \vv)\de \tau  }\de s  \\={}&
\nu\int_0^T \phi  \left ( \nabla \vv- \nabla P_n \vv ;\nabla  \f v^n -\nabla P_n \vv ) \right )   e^{-\int_0^s \mathcal{K}_\nu (P_n \vv)\de \tau } \de s  \\&+ \int_0^T\phi  \left ( ( (\vv- P_n \vv) \cdot \nabla) \vv  +(P_n \vv \cdot \nabla) (\vv - P_n \vv),\f v^n - P_n \vv \right )  e^{-\int_0^s \mathcal{K}_\nu (P_n \vv)\de \tau } \de s 
\\
\leq{}&\nu \| \phi \|_{L^\infty(\Omega)}\| \nabla \vv - \nabla P_n \vv \|_{L^2(0,T;L^2(\Omega))} \| \nabla \f v^n - \nabla P_n \vv \|_{L^2(0,T;L^2(\Omega))} \\
&+ \| \phi \|_{L^\infty(\Omega)}\| \vv -P_n \vv \|_{L^2(0,T;L^{d/2}(\Omega))} \| \nabla \vv \|_{L^\infty(0,T;L^{2d/(d-2)}(\Omega))}  \| \f v^n - P_n\vv \|_{L^2(0,T;L^{2d/(d-2)}(\Omega))} \\
&+  \| \phi \|_{L^\infty(\Omega)} \| P_n \vv\|_{L^\infty(0,T;L^d(\Omega))} \|\nabla \vv-\nabla P_n\vv\|_{L^2(0,T;L^2(\Omega))}  \| \f v^n - P_n\vv \|_{L^2(0,T;L^{2d/(d-2)}(\Omega))} 
\,.
\end{align*}
 Weak convergence of $\f v^n$ in $L^2(0,T;\V)$ implies that the norms of $\f v^n$ on the right-hand side are bounded independent of $n$. Note that dimension $d=2$ is excluded at this point. But the proof can also be adapted to dimension two.   The strong convergence~\eqref{strongconv} allows to pass to the limit on the right-hand side, which vanishes. 
The strong convergence of the projection $P_n$ to the identity on $\Ha$ as $n\ra\infty$ allows to pass to the limit in the initial values, too. 
Finally, we observe from~\eqref{weakkonv},~\eqref{BVconv}, and~\eqref{strongconv} that
\begin{align*}
\mathcal{R}(\f v^n | P_n \vv) \ra \mathcal{R}(\f v | \vv) + \xi \quad \text{a.e.~in }(0,T)\,,
\end{align*}
where $\xi := E -1/2\| \f v \|_{L^2(\Omega)}^2 $ for a.e.~$t\in (0,T)$. 
As a consequence, we observe that the relative energy inequality~\eqref{relen} holds in the limit a.e.~in $(0,T)$. 
Due to the additional regularity of Corollary~\ref{cor:addreg}, \textit{i.e.,} $E\in\BV$ and $\f v \in \C_w([0,T];\Ha)$, the relation even holds everywhere in $[0,T]$. 
%

\textit{Step 5, Vanishing viscosity limit $\nu\ra0$:}
Now, we focus on the case $\nu=0$. 
Therefore, we consider a sequence $\{ (\f v ^{\nu}, \xi^\nu) \}_{\nu \in (0,1)}$ of energy-variational solutions to the Navier-Stokes equations according to Theorem~\ref{thm:exist} for $\nu\ra 0$. 
These solutions fulfill Definition~\ref{def:diss} with $\mathcal{W}_\nu$ given by~\eqref{Wnu}. 
Inserting $ \vv = 0$ in this definition, we find the usual energy estimate~\eqref{monotone} such that with the usual estimates of the right-hand side, \textit{i.e.}, \eqref{rhs} with $\f f_1 =0$ (Note that $\mathbb{Z}_0 = L^\infty(0,T;\Ha)$), we deduce the weak convergence of a subsequence in the energy space, \textit{i.e.,} 
\begin{align*}
\f v^\nu \stackrel{*}{\rightharpoonup} \f v\quad  \text{in } \mathbb{X}_0
\end{align*}
 with $\mathbb{X}_0 $ as given above by $\mathbb{X}_0:=L^\infty(0,T;L^2_{\sigma} (\Omega)) $. Due to the additional regularity deduced in Corollary~\ref{cor:addreg}, we even deduce that 
 \begin{align*}
 \f v ^\nu \ra \f v \quad  \text{in } \C_w([0,T];\Ha)
 \end{align*}
 and thus pointwise for all $t\in(0,T)$. 
Similar as in the proof of Corollary~\ref{cor:addreg}, we infer that $E^\nu = \frac{1}{2}\| \f v^\nu \|_{L^2(\Omega)}^2 + \xi^\nu$ is a bounded sequence in $ \BV$ such that we may select via Helly's theorem an pointwise converging subsequence 
\begin{align*}
E^\nu  \ra E \quad \text{  pointwise everywhere in }[0,T]\,. 
\end{align*}
Note that the bound derived in Corollary~\ref{cor:addreg} does not depend on $\nu$ since the essential \textit{a priori} estimates are independent of $\nu$.

 With the usual skew-sym\-metry in the last two entries of the trilinear form, we find
\begin{align*}
-\left  ( (( \f v^\nu-\vv) \cdot \nabla ) (\f v^\nu-\vv)  , \vv \right )  =  \left ( ( \f v ^\nu - \vv ) \otimes ( \f v^\nu - \vv ) , ( \nabla \vv)_{\sym} \right ) \,.
\end{align*}
This can be used to rewrite the relative energy inequality~\eqref{relen}  into
\begin{multline}
\left [ E^\nu -\left ( \f v^\nu , \vv\right ) + \frac{1}{2}\| \vv\|_{L^2(\Omega)}^2\right ]\Big|_{s}^{t}   \\+ \int_s^t \left [ \nu \| \nabla \f v ^\nu - \nabla \vv \|_{L^2(\Omega)}^2 + \nu \left ( \nabla \vv ; \nabla \f v^\nu -\nabla \vv\right )   \right ] \de \tau\\+ \int_s^t \left [  \mathcal{W}_0(\f v^\nu| \vv) + \langle \mathcal{A}_0 (\vv) , \f v ^\nu - \vv\rangle  - \mathcal{K } (\vv) \left ( E^\nu -\left ( \f v^\nu , \vv\right ) + \frac{1}{2}\| \vv\|_{L^2(\Omega)}^2\right )  \right ] \de\tau \leq 0 \,\label{singularlim}
\end{multline}
for all  $\vv \in \mathbb{Y}_0\cap \D \cap L^2(0,T;H^2(\Omega)) $ and all $\nu>0$.
First, we decrease the number of admissible regularity measures $\mathcal{K}$, in order to make them independent of $\nu$, $\mathcal{K}_0: \mathbb{Y}_0 \ra \R_+$ is chosen such that $\mathcal{W}_0$ given by~\eqref{Wnull} is convex and weakly lower semi-continuous in $\f v$ and continuous in $\vv$ (see Definition~\ref{def:diss}). 
In the first and last line of~\eqref{singularlim}, we may pass to the limit by the weak$^*$ convergence of $\{ (\f v^\nu,E^\nu)\} $ in $ \mathbb{X}_0\cap \C_w([0,T];\Ha) \times \BV $, the lower semi-continuity of $\mathcal{W}_0$ and the linear occurrence in all other terms. 
For the second line, we observe the estimates
\begin{multline*}
\int_s^t\left ( \nu \| \nabla \f v ^\nu - \nabla \vv \|_{L^2(\Omega)}^2 + \nu \left ( \nabla \vv ; \nabla \f v^\nu -\nabla \vv\right )   \right ) \de \tau 
\\
\geq 
\int_s^t \left ( \nu \| \nabla \f v ^\nu - \nabla \vv \|_{L^2(\Omega)}^2 + \sqrt{\nu} \| \nabla \f v^\nu - \nabla \vv \|_{L^2(\Omega)} \sqrt \nu \| \nabla \vv \|_{L^2(\Omega)} \right ) \de t \\ \geq{} \int_s^t \left (  \frac{\nu}{2} \|  \nabla \f v ^\nu - \nabla \vv \|_{L^2(\Omega)}^2 \de s - \frac \nu 2 \| \nabla \vv \|_{L^2(\Omega)} \right) \de \tau 
\\ 
\geq{} -\frac \nu 2 \| \nabla \vv \|_{L^2(\Omega\times (s,t))} 
 \ra 0 \qquad \text{as }\nu \ra 0\,.
\end{multline*}
We infer that the relative energy inequality~\eqref{relen} is fulfilled  in the limit $ \nu \ra 0 $. This proves the existence of energy-variational solutions to the Euler equations and thus the assertion. 
In order to allow more general test functions, \textit{i.e.}, $ \vv \in\mathbb{Y}_0 \cap \Do $ instead of $\mathbb{Y}\cap \D $ one may use usual density arguments and the continuity of $\mathcal{K}_0$ in $\mathbb{Y}_0 \cap \Do$.

\textit{Step 6, Solution set:} Let $(\f v, E)$ be an energy-variational solution according to Defintion~\ref{def:diss}.
First, we observe that $E$ is indeed a function of bounded variation according to Corollary~\ref{cor:addreg}. 
Secondly, let $(\f v^1 ,E^1)$ and $(\f v^2 ,E^2 )$ be in $\mathcal{S}(\f v _0 , \f f)$. Then their convex combination 
$(\f v^\lambda ,E^\lambda)= (\lambda \f v^1 + ( 1-\lambda )\f v^2 , \lambda E^1 + (1-\lambda)E^2) $ for $\lambda \in (0,1)$ is again associated to an energy-variational solution. 
Indeed, we find by the linearity in $E$ and $\f v$  and the convexity of $\mathcal{W}_\nu$ in $\f v$ that
\begin{align*}
\Big [E ^\lambda& -{} ( \f v^\lambda , \vv) +\frac{1}{2}\| \vv\|_{L^2(\Omega)}^2\Big ]\Big|_s^t \\+& \int_s^t \left [ \mathcal{W}_\nu(\f v^\lambda , \vv )   + \left \langle \mathcal{A}_\nu(\vv ) , \f v^\lambda - \vv  \right \rangle - \mathcal{K}(\vv)\left  (E^\lambda - ( \f v^\lambda , \vv) +\frac{1}{2}\| \vv\|_{L^2(\Omega)}^2\right  ) \right ]  \de \tau 
\\
= {}&
 \lambda \Big [E ^1 - ( \f v^1 , \vv) +\frac{1}{2}\| \vv\|_{L^2(\Omega)}^2\Big ]\Big|_s^t\\ &+\lambda \int_s^t \left [ \mathcal{W}_\nu(\f v^\lambda , \vv )   + \left \langle \mathcal{A}_\nu(\vv ) , \f v^1 - \vv  \right \rangle - \mathcal{K}(\vv)\left  (E^1 - ( \f v^1 , \vv) +\frac{1}{2}\| \vv\|_{L^2(\Omega)}^2\right  ) \right ]  \de \tau \\
&+ (1-\lambda ) \Big [E ^2- ( \f v^2, \vv) +\frac{1}{2}\| \vv\|_{L^2(\Omega)}^2\Big ]\Big|_s^t \\&+(1-\lambda) \int_s^t \left [ \mathcal{W}_\nu(\f v^\lambda , \vv )   + \left \langle \mathcal{A}_\nu(\vv ) , \f v^2 - \vv  \right \rangle - \mathcal{K}(\vv)\left  (E^2 - ( \f v^2 , \vv) +\frac{1}{2}\| \vv\|_{L^2(\Omega)}^2\right  ) \right ]  \de \tau 
\\
\leq {}&
\lambda \Big [E ^1 - ( \f v^1 , \vv) +\frac{1}{2}\| \vv\|_{L^2(\Omega)}^2\Big ]\Big|_s^t
 \\&+ 
 \lambda\int_s^t \left [ \mathcal{W}_\nu(\f v^1 , \vv )   + \left \langle \mathcal{A}_\nu(\vv ) , \f v^1 - \vv  \right \rangle - \mathcal{K}(\vv)\left  (E^1 - ( \f v^1 , \vv) +\frac{1}{2}\| \vv\|_{L^2(\Omega)}^2\right  ) \right]  \de \tau \\
&+ (1-\lambda ) \Big [E ^2- ( \f v^2, \vv) +\frac{1}{2}\| \vv\|_{L^2(\Omega)}^2\Big ]\Big|_s^t 
\\&+
(1-\lambda)  \int_s^t \left [ \mathcal{W}_\nu(\f v^2 , \vv )   + \left \langle \mathcal{A}_\nu(\vv ) , \f v^1- \vv  \right \rangle - \mathcal{K}(\vv)\left  (E^2 - ( \f v^2 , \vv) +\frac{1}{2}\| \vv\|_{L^2(\Omega)}^2\right  ) \right ]  \de \tau \leq 0 \,.
\end{align*}

Finally, we want to show that any sequence $\{ \f v^n , E^n \} _{n\in\N} \subset \mathcal{S}(\f v _0 , \f f) $ admits a cluster point in the solution set $\mathcal{S}(\f v _0 , \f f)$. For any sequence  $\{ \f v^n , E^n \} _{n\in\N} \subset \mathcal{S}(\f v _0 , \f f) $, we infer from~\eqref{relen} by choosing $\vv=0$ and $s=0$ the standard \textit{a priori} estimates
\begin{align*}
E^n (t) + \int_0^t \nu \| \nabla \f v^n \| _{L^2(\Omega)}^2 \de s \leq \frac{1}{2} \| \f v_0 \|_{L^2(\Omega)}^2+ \xi (0)  + \int_0^t \langle \f f , \f v^n  \rangle \de s \,.
\end{align*}
From the estimate~\eqref{rhs}, we infer the \textit{a priori} bounds $\{ ( E^n , \f v^n)\} $ in $L^\infty(0,T)\times L^2(0,T;\V)$ in the case $\nu>0$ and $\{ E^n\}$ in $L^\infty(0,T)$ for $\nu=0$. From the definition of $E$, we infer that $E^n \geq 1/2\| \f v^n \|_{L^2(\Omega)}^2 $  on $[0,T]$. This implies that $\{ \f v_n \} $ is bounded in $\mathbb{X}$. Moreover from the Corollary~\ref{cor:addreg}, we infer the boundedness of the sequence $\{ E^n\}$ in $\BV$. Helly's selection principle allows to select an  everywhere in $[0,T]$ converging  subsequence to some limit $E\in\BV$. Furthermore, we infer $\f v^n \stackrel{*} {\rightharpoonup} \f v $ in $\mathbb{X}$. Due to the additional regularity from Corollary~\ref{cor:addreg}, we even infer $\f v^n \ra \f v $ in $\C_w([0,T];\Ha)$. 
Reformulating the relative energy inequality via Lemma~\ref{lem:invar}, allows to pass to the limit in the resulting formulation. 
Indeed, the strong convergence of $E^n$ and the weak convergence of $\f v^n$ imply 
\begin{multline*}
\lim_{n\ra\infty} \int_s^t  \left ( E^n - ( \f v ^n , \vv) + \frac{1}{2}\| \vv \|_{L^2(\Omega)}^2 \right ) \de \tau 
\\+ \lim_{n\ra \infty} \int_s^t  \left ( \left \langle \mathcal{A}(\vv), \f v^n - \vv \right \rangle - \mathcal{K}(\vv) \left ( E^n - ( \f v ^n , \vv) + \frac{1}{2}\| \vv \|_{L^2(\Omega)}^2 \right ) \right ) \de \tau 
\\
= \int_s^t  \left ( E - ( \f v  , \vv) + \frac{1}{2}\| \vv \|_{L^2(\Omega)}^2 \right ) \de \tau  
\\+  \int_s^t  \left ( \left \langle \mathcal{A}(\vv), \f v - \vv \right \rangle - \mathcal{K}(\vv) \left ( E - ( \f v  , \vv) + \frac{1}{2}\| \vv \|_{L^2(\Omega)}^2 \right ) \right ) \de \tau \,
\end{multline*}
for all $s<t \in (0,T)$. 
The weak convergence of $\{ \f v^n\}$ and the weakly lower semi continuity of $\mathcal{W}_\nu$  imply 
\begin{align*}
\liminf_{n\ra\infty} \int_s^t  \mathcal{W}_\nu ( \f v^n  | \vv ) \de \tau \geq \int_s^t  \mathcal{W}_\nu ( \f v | \vv ) \de \tau 
\end{align*}
for all $\vv \in \mathbb{Y}\cap \D$ and for all $s<t \in (0,T)$. 
 This implies that the limit $(\f v,E)$ is again an energy-variational solution according to Definition~\ref{def:diss} via the reformulation $\xi = E-1/2\| \f v\|_{L^2(\Omega)}^2$. Again the condition $\xi \geq 0$ is fulfilled, due to 
\begin{align*}
E(t)= \lim _{n\ra \infty}  E^n(t)  \geq \liminf _{n\ra \infty} \frac{1}{2}\| \f v^n (t) \|_{L^2(\Omega)}^2 \geq  \frac{1}{2}\| \f v (t) \|_{L^2(\Omega)}^2 
\end{align*}
for all $t\in[0,T]$.

\textit{Step 7, Continuous dependence:}
In order to prove the claimed continuity of the set-valued mapping, we need to prove two assertions.
Therefore, let
\begin{align}
\f f^n \ra \f f \text{ in }\mathbb{Z}\quad \text{and}\quad \f v ^n_0 \ra \f v_0 \text{ in }\Ha \,.\label{convdata}
\end{align}
  We equip the domain $  \Ha \times \mathbb{Z}$ with the strong topology and the range $\mathbb{X}
  \times \BV $ with the weak$^*$ topology. 
  
 Introducing the Kuratowski limits 
\begin{align*}
\Ls_{(\f v_0^n, \f f^n)  \ra (\f v_0 ,\f f) } \mathcal{S}(\f v_0^n, \f f^n ) :={}& \Big \{ (\f v, E) \in \mathbb{X}
\times \BV ; \\
&\quad\text{ there exsits a sequence } ( \f v_0^n, \f f^n) \ra (\f v_0,\f f) \text{ and }(\f v^n, E^n) \stackrel{*}{\rightharpoonup}(\f v, E) \\& \quad \text{ such that } (\f v^n, E^n)\in \mathcal{S}(\f v_0^n , \f f^n )\Big \}\,,
\\
\Li_{(\f v_0^n, \f f^n)  \ra (\f v_0 ,\f f) } \mathcal{S}(\f v_0^n, \f f^n ) :={}& \Big \{ (\f v, E) \in \mathbb{X}
\times \BV ;\\&\quad  \text{ for all } (\f v_0^n, \f f^n)  \ra (\f v_0 ,\f f) \text{ there exsists }(\f v^n, E^n) \in \mathcal{S}( \f v_0^n, \f f^n)\\& \quad \text{ such that } (\f v^n, E^n) \stackrel{*}{\rightharpoonup}(\f v, E) \Big \}
\end{align*}
we need to prove that $$\Ls_{(\f v_0^n, \f f^n)  \ra (\f v_0 ,\f f) } \mathcal{S}(\f v_0^n, \f f^n ) \subset \mathcal{S}(\f v_0 , \f f )\text{ and }\Li_{(\f v_0^n, \f f^n)  \ra (\f v_0 ,\f f) } \mathcal{S}(\f v_0^n, \f f^n )\supset \mathcal{S}(\f v _0,\f f) \,,$$ which are called upper and lower semi-continuity, respectively. 
A set-valued map is said to be continuous, if $\Ls_{(\f v_0^n, \f f^n)  \ra (\f v_0 ,\f f) } \mathcal{S}(\f v_0^n, \f f^n )=\Li_{(\f v_0^n, \f f^n)  \ra (\f v_0 ,\f f) } \mathcal{S}(\f v_0^n, \f f^n )$. 

\textit{Step 7.1, Upper semi-continuity:} 
First, we show that $\Ls_{(\f v_0^n, \f f^n)  \ra (\f v_0 ,\f f) } \mathcal{S}(\f v_0^n, \f f^n )\subset \mathcal{S}(\f v _0,\f f)$. 
 From the steps 1 to 5, we infer that to every $n\in\N$ there exists an energy-variational solution $(\f v^n , E^n)$ according to Definition~\ref{def:diss}.
Since $ (\f v_0^n , \f f^n) $ is bounded in $\Ha \times \mathbb{Z}$, we infer the boundedness of 
the sequence  $\{( \f v^n,E^n )\}_{n\in\N} $ in $ \mathbb{X}\times \BV 
$ in the same way as in the Corollary~\ref{cor:addreg}. 
Thus, there exists some $(\f v, E )$ and some subsequence $\{ ( \f v^{n_k}, E^{n_k})\}$ such that  $ ( \f v^{n_k}, E^{n_k})\stackrel{*}{\rightharpoonup} (\f v ,E) $ in $\mathbb{X}\times \BV
$. 
For every $\psi \in \C^1_c((0,T)) $ with $\psi \geq 0$, we observe that
\begin{align*}
\liminf_{n\ra \infty} \int_0^T \psi \left ( \frac{1}{2}\| \f v ^{n_k} \| _{L^2(\Omega)}^ 2- E^{n_k} \right ) \de t \geq \int_0^T \psi \left ( \frac{1}{2}\| \f v  \| _{L^2(\Omega)}^ 2- E \right ) \de t 
\end{align*}
from the strong convergence of $\{E^{n_k}\}$ in $L^1(0,T)$ and the weakly lower semi-continuity of the $L^2$-norm. 
Similar, we observe for every fixed $\phi \in   \C^1_c((0,T)) $ with $\phi \geq 0$, and $\vv \in \mathbb{Y}$ that 
\begin{multline*}
-\limsup_{n\ra \infty}  \int_0^T \phi' \left [E^{n_k} - ( \f v ^{n_k}, \vv ) \right ] \de t + \liminf_{n\ra \infty}\Bigg [  \int_0^T \phi  \mathcal{K}(\vv ) \left [ \frac{1}{2}\| \f v ^{n_k}  \| _{L^2(\Omega)}^2 - E^{n_k}  \right ]\de t \\  \int_0^T \phi \left [ \nu \left ( \nabla \f v^{n_k} , \nabla \f v^{n_k}-\nabla \vv\right ) + \left (\f v^{n_k} \otimes \f v^{n_k} ; \nabla \vv\right ) +\left  ( \t \vv , \f v^{n_k} \right ) - \left \langle \f f ^{n_k} , \f v^{n_k} - \vv \right \rangle \right ] \de t \Bigg]
\\
\geq - \int_0^T \phi' \left [E - ( \f v , \vv ) \right ] \de t + \int_0^T \phi  \mathcal{K}(\vv ) \left [ \frac{1}{2}\| \f v   \| _{L^2(\Omega)}^2 - E  \right ]\de t 
\\
+ \int_0^T \phi \left [ \nu \left ( \nabla \f v , \nabla \f v-\nabla \vv\right ) + \left (\f v \otimes \f v ; \nabla \vv\right ) +\left  ( \t \vv , \f v \right ) - \left \langle \f f  , \f v - \vv \right \rangle \right ] \de t \,.
\end{multline*}
Note that the only difference in this limit in comparison to the proof of \textit{Step 6} is that $\f f^{n_k}$ converges strongly now, which together with the weak convergence of $\{\f v^{n_k}\} $ implies the convergence of their product. 
This implies that the limit $(\f v,E )$ is an energy-variational solution again and therewith $(\f v,E ) \in \mathcal{S}(\f v_0,\f f)$.

\textit{Step 7.1, Lower semi-continuity:} 
Secondly, we show that $\Li_{(\f v_0^n, \f f^n)  \ra (\f v_0 ,\f f) } \mathcal{S}(\f v_0^n, \f f^n )\supset \mathcal{S}(\f v _0,\f f)$.
Therefore, we have to construct a recovery sequence. For a given $( \f v_0^n , \f f^n ) \ra (\f v_0,\f f) $, we construct $\bar{\f v}^n \in L^2(0,T;\V)\cap H^1(0,T;(\V)^*)\cap\C([0,T];\Ha)$ as the solution to the Stokes problem 
\begin{align}\label{recov}
\t \bar{\f v}^n - \nu \Delta \bar{\f v}^n + \nabla \bar{p}^n ={}& \f f^n - \f f \quad&& \text{in } \Omega \times (0,T)
\\
\di \bar{\f v}^n ={}& 0\quad &&\text{in }\Omega \times (0,T)\notag
\\
\bar{\f v}^n ={}& 0 \quad &&\text{on }\partial \Omega \times (0,T)\notag
\\
\bar{\f v}^n (0) ={}& \f v^n_0 - \f v _0 \quad&&\text{in }\Omega \,.\notag
\end{align}
Note that this linear PDE problem can be solved by Lions theorem  (see\cite[Thm.~11.3]{renardy}) on linear parabolic problems in the usual weak sense in the space indicated above. Since Lions theorem also guarantees the continuous dependence, we infer that
\begin{align}
\bar{\f v}^n  \ra  0 \quad \text{in }   \mathbb{X}\cap\C([0,T];\Ha)\,.\label{convvbar}
\end{align}
Note that in the case $\nu=0$, the problem~\eqref{recov} is solved by the choice 
$ \bar{\f v}^n (t) = \f v^n_0-\f v_0 + \int_0^t P(\f f^n - \f f) \de s $, where $P$ denotes the Leray-projection onto solenoidal functions in $L^2(\Omega)$. 
Now, we may choose $\f v^n = \f v+ \bar{\f v}^n$. By construction it holds that $$\f v^n(0) = \f v^n _0\text{ and } \f v^ n \ra \f v\text{ in } \mathbb{X}\cap\C_w([0,T];\Ha)\,.$$
In order to infer a condition on $E^n$, we consider the relative energy inequality for $\f v^n$,
\begin{multline}
\left [E^n - ( \f v^n , \vv) \right ] \Big|_s^t+\int_s^t\mathcal{K}(\vv) \left [\frac{1}{2}\| \f v^n \|^2_{L^2(\Omega)}- E^n \right ]\de \tau \\
+\int_s^t \left [  \nu \left (\nabla \f v ^n , \nabla \f v^n - \nabla \vv \right ) + \left ( \f v^n \otimes \f v^n ; \nabla \vv \right ) + \left ( \t \vv, \f v^n \right ) - \left \langle \f f ^n , \f v ^n - \vv \right \rangle  \right ]\de \tau \\
=\left [E - ( \f v , \vv)  \right ] \Big|_s^t +\int_s^t \mathcal{K}(\vv ) \left [ \frac{1}{2}\| \f v  \| _{L^2(\Omega)}^2 - E  \right ] \de \tau
\\+ \int_s^t \nu \left ( \nabla \f v , \nabla \f v - \nabla \vv \right ) + \left ( \f v (t) \otimes \f v(t) ; \nabla \vv (t) \right ) + \left ( \t \vv , \f v \right ) - \langle \f f , \f v - \vv \rangle \de \tau 
\\ + \left [ \frac{1}{2}\| \bar{\f v}^n \|_{L^2(\Omega)}^2 - \left ( \bar{\f v}^n , \vv\right ) \right ] \Big|_s^t + \int_s^t \nu \left ( \nabla \bar{\f v}^n , \nabla \bar{\f v}^n - \nabla \vv\right ) + \left ( \t \vv, \bar{\f v}^n \right ) - \left \langle \f f^n - \f f, \bar{\f v}^n - \vv \right \rangle \de \tau 
\\
+ \left [ \bar{E}^n - \frac{1}{2}\| \bar{\f v}^n \|_{L^2(\Omega)}^2 \right ] \Big|_s^t 
+\int_s^t \mathcal{K}(\vv) \left [\left ( \f v , \bar{\f v}^n \right ) + \frac{1}{2}\| \bar{\f v}^n\|_{L^2(\Omega)}^2 - \bar{E}^n\right ]\de \tau 
\\+ \int_s^t 2\nu \left ( \nabla \f v , \nabla \bar{\f v}^n\right ) +  \left ( 2\bar{\f v}^n \otimes \f v + \bar{\f v}^n \otimes\bar{\f v}^n ; \nabla \vv\right )- \left \langle \f f ^n - \f f , \f v \right \rangle - \left \langle \f f, \bar{\f v}^n \right \rangle \de \tau \,.
\end{multline}
Since $(\f v, E) \in \mathcal{S}(\f v_0, \f f)$, the first two lines on the right-hand side of the previous inequality are non-positive, and for $\bar{\f v}^n $ as a weak solution to~\eqref{recov}, the third line on the right-hand side is zero. Thus it remains to choose $ \bar{E} ^n$ in such a way that the right-hand side is non-positive. Therefore, we consider the estimate 
$$  \left ( 2\bar{\f v}^n \otimes \f v + \bar{\f v}^n \otimes\bar{\f v}^n ; \nabla \vv\right ) \geq - \mathcal{K}(\vv) \left ( \| \f v \|_{L^2(\Omega)}\| \bar{\f v}^n \|_{L^2(\Omega)} + \frac{1}{2}\| \bar{\f v}^n \|_{L^2(\Omega)}^2 \right )\,,$$
in order to infer the conditions 
\begin{subequations}\label{conditons}
\begin{align}
\bar{E}^n (t) \geq {}& 2 \| \bar{\f v}^n(t) \|_{L^2(\Omega)}\| \f v (t)\|_{L^2(\Omega)} \quad\text{for all }t \in [0,T] 
\intertext{and}
\left [\bar{E}^n- \frac{1}{2}\| \bar{\f v}^n \|_{L^2(\Omega)}^2 \right ] \Big|_s^t &+ \int_s^t 2 \nu \left (\nabla \f v,\nabla \bar{\f v}^n \right ) - \langle \f f^v-\f f , \f v \rangle - \langle \f f , \bar{\f v}^n \rangle \de \tau  \leq 0 \quad\text{for all }s<t \in[0,T]\,.
\end{align}
\end{subequations}
One possible choice to define $\bar{E}^n$ would be to set 
\begin{align*}
\bar{E}^n(0) := \max_{t\in [0,T]} \Big [&2 \| \bar{\f v}^n \|_{\C([0,T];L^2(\Omega))}\| \f v \|_{L^\infty(0,T;L^2(\Omega))}  - \frac{1}{2}\| \bar{\f v}^n(t) \|_{L^2(\Omega)}^2 \\&+ \int_0^t 2 \nu \left (\nabla \f v,\nabla \bar{\f v}^n \right ) - \langle \f f^v-\f f , \f v \rangle - \langle \f f , \bar{\f v}^n \rangle \de s + \frac{1}{2}\| \bar{\f v}^n_0\|_{L^2(\Omega)}^2  \Big ]
\end{align*}
and define 
\begin{align*}
\bar{E}^n(t) = \bar{E}^n(0) - \frac{1}{2}\| \bar{\f v}^n_0\|_{L^2(\Omega)}^2 + \int_0^t 2 \nu \left (\nabla \f v,\nabla \bar{\f v}^n \right ) - \langle \f f^v-\f f , \f v \rangle - \langle \f f , \bar{\f v}^n \rangle \de s +  \frac{1}{2}\| \bar{\f v}^n(t) \|_{L^2(\Omega)}^2 \,.
\end{align*}
By this choice the conditions~\eqref{conditons} are fulfilled and additionally the convergences~\eqref{convvbar} and~\eqref{convdata} as well as the boundedness of $\f v$ in $\mathbb{X}$ allow to deduce that 
\begin{align*}
\bar{E}^n  \ra  0 \quad \text{ in $\BV$ and everywhere on }[0,T]\,,
\end{align*}
which implies the assertion.


 \end{proof}
 \begin{remark}
 The proof for the Euler equation is also possible via the Galerkin scheme. 
 
 The Galerkin proof can also be seen as a version of Lax theorem. If the scheme is stable with respect to the relative energy inequality, \textit{i.e.}, fulfills~\eqref{stability} and the consistency error vanishes, \textit{i.e.}, $\mathcal{A}_n(\vv) - \mathcal{A}_n(P_n \vv) \ra 0 $  in $\mathbb{X}_\nu^*$ as $n\ra \infty$ for smooth functions, then the numerical scheme converges. The discrete system operator $\mathcal{A}_n$ may be chosen differently. 
 
 The continuous dependence result is presented in a set-theoretic sense via convergence in the Kuratowski sense. These convergences are introduced in~\cite[Sec.~29]{Topology}. 
The connection of Kuratowski convergence  of the epigraphs of a sequence of convex functions to Gamma convergence of the associated convex functions is considered in~\cite[Thm.~4.16]{DalMaso}.
The continuity  of the set-valued map is consistent with the usual definition (see~\cite[Sec.~1.4]{Aubin}).

 \end{remark}
 \subsection{Well-posedness of minimal  energy-variational solutions\label{sec:well}}
 \begin{proof}[Proof of Theorem~\ref{thm:maxdiss}]
 For convenience, the proof is divided into several steps. 
 
 \textit{Step 1: Local argument.}
For the minimal energy-variational solution $(\f v , \xi)\in  \mathbb{X}\times L^\infty (0,T)$, we observe that for a~$t_0\in(0,T)$ with $\xi (t_0+) >0$, we may restart the process in $t_0$. 
If we call $(\bar{\f v}, \bar{\xi})$ the energy-variational solution on $(0,T-t_0 )$ starting from the initial value $\f v(t_0)$ then it holds
 \begin{multline*}
\mathcal{R}(\bar{\f v}( t) | \vv(t) ) +\bar \xi(t)+ \int_s^t \left ( \mathcal{W}_\nu(\bar{\f v} , \vv )   + \left \langle \mathcal{A}_\nu(\vv ) , \bar{\f v} - \vv  \right \rangle - \mathcal{K}(\vv) (\mathcal{R}(\bar{ \f v}| \vv ) +\bar\xi) \right )  \de \tau 
\\
\leq \mathcal{R}( \bar{\f v}( s) | \vv(s) )+ \bar \xi (s)  
\end{multline*}
for a.e.~$t$, $s\in (0,T-t_0)$. The concatenation $(\f v^2,\xi^2)$ defined by
\begin{align}\label{constructv2}
\begin{cases}
(\f v^2(t),\xi^2(t))= (\f v(t),\xi(t))\quad & \text{for }t\in(0,t_0)
\\
(\f v^2(t),\xi^2(t))= (\bar{\f v}(t-t_0),\bar \xi(t-t_0))\quad & \text{for }t\in(t_0,T)\,.
\end{cases}
\end{align}
 is then again an energy-variational solution on $(0,T)$, due to the fact that  $\f v^2 \in \C_w([0,T];\Ha)$ and
\begin{align*}
\lim_{t\searrow t_0 } \left ( \frac{1}{2}\| \f v^2(t) \|_{L^2(\Omega)}^2 + \xi^2(t) \right ) \leq \frac{1}{2}  \| \f v(t_0) \|_{L^2(\Omega)}^2 < \frac{1}{2}  \| \f v(t_0+) \|_{L^2(\Omega)}^2 + \xi(t_0+) \,.
\end{align*}
But this is violating the admissibility criterion of Definition~\ref{def:ad} for $t=t_0$. Thus it holds that $\xi(t_0) = 0 $ for every minimal energy-variational solution. 
Note that according to the proof of Theorem~\ref{thm:exist}, we may chose an energy-variational solution with $\xi(0)=0$.

Assume now, that there are two minimal energy-variational solutions $(\f v^1 , 0)$, $(\f v^2,0) \in \mathbb{X}\times L^\infty(0,T)$ such that $$ \frac{1}{2} \| \f v ^1(t_0) \|_{L^2(\Omega)} ^2 = E(t_0 +) = \frac{1}{2}\| \f v^2(t_0) \|_{L^2(\Omega)}^2 \quad \text{but}\quad \f v^1 \neq \f v^2 \,.$$
Due to the convexity of the solution set~(cf.~Theorem~\ref{thm:exist}), we observe that $\f v ^\lambda = \lambda\f v^1 + (1-\lambda)\f v^2$   is also an energy-variational solution according to Definition~\ref{def:diss} for all $\lambda \in [0,1]$ with the associated defect function vanishing $\xi^\lambda(t) =0 $. 
But due to the fact that for $\lambda \in (0,1)$, the energy decreases, \textit{i.e.},
\begin{align*}
E^\lambda (t_0+) = \frac{1}{2}\| \f v ^\lambda(t_0) \|_{L^2(\Omega)}^2 < \frac{ \lambda}{2}\| \f v^1(t_0) \|_{L^2(\Omega)}^2 + \frac{(1-\lambda)}{2} \| \f v^2(t_0) \|_{L^2(\Omega)}^2 = E(t_0+) \quad\,
\end{align*}
for $\lambda \in (0,1)$.
The functions $\f v^1$ and $\f v^2$ can not be admissible or it holds $\f v^1 = \f v^2 $. Thus the minimal energy-variational solution is unique in $t_0$. 

 \textit{Step 2: Restarting.}
From Theorem~\ref{thm:exist}, we infer that to every $\f v(t_0) \in \Ha$, there exists at least one energy-variational solution on $[0,T-t_0 ]$ with $\xi(0)=0$. Additionally, from Theorem~\ref{thm:exist} we infer that the associated  set  of solutions is  compact in the weak$^*$-topology. 
In the proof of Theorem~\ref{thm:exist}, we proved sequential compactness. Note that the weak$^*$-topology in $\BV$ is determined by the topology of Radon measures $\mathcal{M}([0,T])$, since the derivatives of the $\BV$-functions are Radon measures. This space is the dual of the continuous functions $\C([0,T])^* = \mathcal{M}([0,T])$, which are separable such that in the associated weak$^*$-topology sequential compactness coincides with (covering) compactness. 

Similarly, the spaces $L^2(0,T;\V)$ for $\nu >0$ and $L^2(0,T;\Ha)$ for $\nu = 0$
 are reflexive and thus for the associated weak topology sequential and covering compactness coincides due to the Theorem by Eberlein--\v{S}mulian (see~\cite{Eberlein}). 
Since the solution set is also bounded, it is by the Banach--Alaoglu theorem a closed subset of a compact set and as such compact itself. The procedure of restarting in a given point $t_0\in(0,T)$, hence provides a compact set of solutions. Now let $0<s<t<T$. Then it holds $ \mathcal{S}(\f v(t) ,\f f (\cdot +t ) ) \subset \mathcal{S}(\f v(s), \f f(\cdot  +s))$, for  $ (\f v (t), \xi (t))\in\mathcal{S}(\f v(s), \f f(\cdot  +s))  $. Indeed,  by the construction especially due to the property of the solution due to Remark~\ref{rem:semi} every energy-variational solution restarting in $t$ is also an energy-variational solution starting from $s$.

\textit{Step 3: Selection.}
Indeed, for every $t\in (0,T)$ there is a unique $\f v(t)$ such that starting from this initial value, we infer the solution set $\mathcal{S}(\f v(t) , \f f (\cdot + t) )$, which is compact in the weak$^*$ topology.
 The intersection of the solution sets 
$$ \mathcal{S}:=\bigcap_{t\in[0,T]}\mathcal{S}(\f v(t) , \f f (\cdot + t) )$$ is thus the intersection of nested compact  sets and as such nonempty and compact again.
From the point-wise uniqueness, also the global uniqueness follows. Thus, we constructed a unique solution which fulfills Definition~\ref{def:ad}. 
Moreover, by the construction it holds $E(t+) = \frac{1}{2}\| \f v (t) \|_{L^2(\Omega)}^2 $ and thus, $\xi(t) = 0$. 
From Proposition~\ref{prop:equation} it follows that the unique minimal energy-variational solution is actually a weak solution according to Definition~\ref{def:weak}.

 \textit{Step 4: Continuous dependence.}
 Now let again $(\f v_0^n , \f f^n)$ be such that the convergence~\eqref{convdata} holds. Then to every $n\in \N$ there exists a unique minimal energy-variational solution $\f v^n\in \mathbb{X}$ according to the previous steps of the proof. Now we want to consider the sequence $\{ \f v^n\}\subset \mathbb{X}$. Since every minimal energy-variational solution is also an energy-variational solution, \textit{i.e.}, $ (\f v^n , 0) \in \mathcal{S}(\f v_0^n , \f f^n)$, we infer by the continuous dependency result from Theorem~\ref{thm:exist} that the limit of  a sub sequence, say $(\bar{\f v}, \bar{E} )$ is again an energy-variational solution $(\bar{\f v}, \bar{E} ) \in \mathcal{S}(\f v_0, \f f)$.  Additionally, the convergence $ 1/2\| \f v^n (t) \|_{L^2(\Omega)}^2 \ra \bar{E}(t)$ holds  for all $t\in [0,T]$.
 Due to the definition of minimal energy-variational solutions of Definition~\ref{def:ad}, we infer that  for all $t\in [0,T]$, it holds that $ 1/2\|\f v (t)\|_{L^2(\Omega)}^2 \leq \bar{E}(t)$.

 Assume  now that there exists a $t_0 \in [0,T]$ such that $ 1/2 \| \f v(t_0) \|_{L^2(\Omega)}^2 < \bar{E}(t_0)$. Then, by the continuous dependence result of Theorem~\ref{thm:exist}, there exists a sequence 
$ \{ (\f u^n , F^n ) \} $ such that $  (\f u^n , F^n ) \in \mathcal{S}(\f v_0^n, \f f^n ) $ and $ F^n(t) \ra 1/2\| \f v (t) \|_{L^2(\Omega)}^2 $ for all $ t \in [0,T]$.
 Hence, there exists a $ N\in \N$ such that for all $n\geq N $ it holds that $ F^n(t_0) <1/2\| \f v^n (t_0) \|_{L^2(\Omega)}^2$. But this contradicts the fact that all $ \f v^n$ are minimal energy-variational solutions, since the solution $  (\f u^n , F^n ) \in \mathcal{S}(\f v_0^n, \f f^n ) $ is an admissible energy-variational solution. 
 
 We conclude that $1/2 \| \f v(t_0) \|_{L^2(\Omega)}^2 = \bar{E}(t_0) $ and due to the uniqueness $ \bar{\f v} = \f v$. 
 From  the pointwise convergence of the norms $ \| \f v^n(t) \|^2_{L^2(\Omega)} \ra \| \f v(t) \| _{L^2(\Omega)}^2 $ for all $t\in [0,T]$, one may concludes together with a suitable dominating function  that $ \| \f v^n \|_{L^2(0,T;L^2(\Omega))}^2 \ra \| \f v \|_{L^2(0,T;L^2(\Omega))}^2$.
 Together with the weak convergence $ \f v^n \rightharpoonup \f v $ in $L^2(0,T;L^2(\Omega))$ one may deduces the strong convergence 
 \begin{align*}
 \f v^n \ra \f v  \quad \text{in } L^2(0,T;L^2(\Omega))\,,
 \end{align*}
 by the uniform convexity of $L^2(0,T;L^2(\Omega))$ (see~\cite[Prop.~3.32]{brezisbook}).
\end{proof}

\begin{remark}[Comparison to measurable selections]
The selection argument of the previous proof is very similar to other selections principles (see for instance~\cite{maxbreit}). In the previous proof, we did not consider the measurablilty of the selected set in the Hausdorff metric of the associated power set.   
Nevertheless, the solution set $\mathcal{S}(\f v _0 ,\f f) $  is measurable with respect to the topology induced by the Hausdorff measure on the product topology, since the associated set-valued mapping is continuous (see~\cite[Prop.~8.21]{Aubin}).  Moreover the selected minimal energy-variational solution is measurable too, since it continuously depends on the data in the associated topologies. 
\end{remark}

\end{document}